\numberwithin{equation}{section}
\numberwithin{figure}{section}
  \theoremstyle{plain}
  \newtheorem*{thm*}{\protect\theoremname}
\theoremstyle{plain}
\newtheorem{thm}{\protect\theoremname}
  \theoremstyle{definition}
  \newtheorem{defn}[thm]{\protect\definitionname}
  \theoremstyle{plain}
  \newtheorem{prop}[thm]{\protect\propositionname}
  \theoremstyle{definition}
  \newtheorem{example}[thm]{\protect\examplename}
  \theoremstyle{remark}
  \newtheorem{rem}[thm]{\protect\remarkname}
  \theoremstyle{plain}
  \newtheorem{lem}[thm]{\protect\lemmaname}
  \theoremstyle{plain}
  \newtheorem{cor}[thm]{\protect\corollaryname}
\theoremstyle{definition}
\newtheorem{parn}{}[subsection]
  \providecommand{\corollaryname}{Corollary}
  \providecommand{\definitionname}{Definition}
  \providecommand{\examplename}{Example}
  \providecommand{\lemmaname}{Lemma}
  \providecommand{\propositionname}{Proposition}
  \providecommand{\remarkname}{Remark}
  \providecommand{\theoremname}{Theorem}
\providecommand{\theoremname}{Theorem}
\begin{document}
\author{Adrien Dubouloz}
\address{Institut de Math\'ematiques de Bourgogne,  9 avenue Alain Savary, 21 078 Dijon, France} \email{adrien.dubouloz@u-bourgogne.fr}
\author{Takashi Kishimoto}
\address{Department of Mathematics, Faculty of Science, Saitama University, Saitama 338-8570, Japan} 
\email{tkishimo@rimath.saitama-u.ac.jp}
 \thanks{The first author was partialy supported by ANR Grant  "BirPol"  ANR-11-JS01-004-01. The second author was supported by a Grant-in-Aid for Scientific Research of JSPS No. 24740003. This work was done during a stay of the first author at Saitama University and a stay of the second author at the Institut de Math\'ematiques de Bourgogne (Dijon). The authors thank these institutions for the hospitality.}

\title{Log-uniruled affine varieties without cylinder-like open subsets}
\begin{abstract}
A classical result of Miyanishi-Sugie and Keel-M${\rm {}^c}$kernan asserts that for smooth affine
surfaces, $\mathbb{A}^{1}$-uniruledness is equivalent to $\mathbb{A}^{1}$-ruledness,
both properties being in fact equivalent to the negativity of the
logarithmic Kodaira dimension. Here we show in contrast that starting
from dimension three, there exists smooth affine varieties which are
$\mathbb{A}^{1}$-uniruled but not $\mathbb{A}^{1}$-ruled.
\end{abstract}
\maketitle

\section*{Introduction}

Complex uniruled projective varieties are nowadays considered through the
Mori Minimal Model Program (MMP) as the natural generalization to higher
dimensions of birationaly ruled surfaces (see e.g. \cite{KM98}). In particular, 
as in the case of ruled surfaces, these are the varieties for which the program
does not yield a minimal model, but a Mori fiber space. These varieties 
are also conjectured to be the natural generalization to higher dimensions
of surfaces of negative Kodaira dimension, the conjecture being in fact established
as long as the abundance conjecture holds true \cite{FA}, hence in particular for smooth threefolds. 
During the past decades, the systematic study of the geometry of rational curves on these varieties
has been the source of many progress in the structure theory for higher
dimensional, possibly singular, projective varieties to which the MMP can be applied. 
The situation is much less clear for non complete varieties, in particular affine ones. 

The natural analogue of ruledness in this context is the notion of
$\mathbb{A}^{1}$-ruledness, a variety $X$ being called {\it $\mathbb{A}^{1}$\emph{-}ruled}
if it contains a Zariski dense open subset $U$ of the form
$U\simeq\mathbb{A}^{1}\times Y$ for a suitable quasi-projective variety
$Y$. A landmark result of Miyanishi-Sugie \cite{MS80} asserts that
a smooth affine surface is $\mathbb{A}^{1}$-ruled if and only if
it has negative logarithmic Kodaira dimension \cite{Ii77}.
For such surfaces, the projection $\mathrm{pr}_{Y}:U\simeq\mathbb{A}^{1}\times Y\rightarrow Y$
always extends to a fibration $p:X\rightarrow Z$ with general fibers
isomorphic to the affine line $\mathbb{A}^{1}$ over an open subset of the smooth projective
model of $Y$, providing the affine counterpart of the fact that a
smooth birationaly ruled projective surface has the structure of a
fibration with general fibers isomorphic to $\mathbb{P}^{1}$ over
a smooth projective curve. This result, together with the description
of the geometry of degenerate fibers of these fibrations, has been
one of the cornerstones of the structure theory of smooth affine surfaces
developed during the past decades. But in contrast, the foundations
for a systematic study of $\mathbb{A}^{1}$-ruled affine threefolds
have been only laid recently in \cite{GMM12}. On the other hand,
from the point of view of logarithmic Kodaira dimension, the appropriate
counterpart of the notion of uniruledness for a non necessarily complete
variety $X$ is to require that $X$ is generically covered by images
of the affine line $\mathbb{A}^{1}$, in the sense that the set of
points $x\in X$ with the property that there exists a non constant
morphism $f=f_{x}:\mathbb{A}^{1}\rightarrow X$ such that $x\in f_{x}(\mathbb{A}^{1})$
is dense in $X$ with respect to the Zariski topology. Such varieties are called {\it {$\mathbb{A}^{1}$}-uniruled},
or {\it log-uniruled} after Keel and McKernan \cite{KM99}, and
can be equivalently characterized by the property that they admit
an open embedding into a complete variety $\overline{X}$ which is covered
by proper rational curves meeting the boundary $\overline{X}\setminus X$
in at most one point. In particular, a smooth $\mathbb{A}^{1}$-uniruled
quasi-projective variety $X$ has negative logarithmic Kodaira dimension.
It is conjectured that the converse holds true in any dimension, but
so far, the conjecture has been only established in the case of surfaces
by Keel and McKernan \cite{KM99}. 

It follows in particular from these results that for smooth affine
surfaces the notions of $\mathbb{A}^{1}$-ruledness and $\mathbb{A}^{1}$-uniruledness
coincide. Pursuing further the analogy with the classical projective
notions, it seems then natural to expect that these do
no longer coincide for higher dimensional affine varieties. Our main
result confirms that this is indeed the case. Namely, we establish
the following:
\begin{thm*}
For every $n\geq3$, the complement of a smooth hypersurface $Q_{n}$
of degree $n$ in $\mathbb{P}^{n}$ is $\mathbb{A}^{1}$-uniruled
but not $\mathbb{A}^{1}$-ruled.
\end{thm*}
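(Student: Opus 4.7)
\emph{$\mathbb{A}^{1}$-uniruledness.} I plan to exhibit, through each general point of $X = \mathbb{P}^{n}\setminus Q_{n}$, an embedded affine line, arising from a line $L \subset \mathbb{P}^{n}$ with $n$-fold contact to $Q_{n}$ at a single point (an ``osculating line''). At $p \in Q_{n}$, choose local affine coordinates so that $Q_{n}$ has equation $f = f_{1} + f_{2} + \cdots + f_{n}$ with $f_{k}$ the homogeneous degree-$k$ part; then a line through $p$ in direction $v$ has intersection multiplicity $\geq n$ at $p$ iff $f_{k}(v) = 0$ for $k = 1, \ldots, n-1$. In $\mathbb{P}(T_{p}\mathbb{P}^{n}) \simeq \mathbb{P}^{n-1}$ these are $n-1$ hypersurface equations of degrees $1, 2, \ldots, n-1$, so their common zero set is non-empty by B\'ezout, of length $(n-1)!$ generically. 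For such a line $L$ we have $L \cap Q_{n} = \{p\}$ as a set, so $L \setminus \{p\} \simeq \mathbb{A}^{1} \subset X$; and the incidence variety of osculating pairs $(p, L)$ has dimension $n-1$, so the total space of points on these lines is $n$-dimensional, covering a dense open subset of $\mathbb{P}^{n}$.

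\emph{Setup for failure of $\mathbb{A}^{1}$-ruledness.} Suppose, for contradiction, that $X$ contains a dense open $U \simeq \mathbb{A}^{1} \times Y$ with $\dim Y = n-1$. The corresponding $\mathbb{A}^{1}$-fibration extends to a rational map $\mathbb{P}^{n} \dashrightarrow \overline{Y}$; the closure in $\mathbb{P}^{n}$ of a general fiber $C \simeq \mathbb{A}^{1}$ is a rational curve $\overline{C}$ of some fixed degree $d \geq 1$. Since $C \simeq \mathbb{A}^{1}$, the normalization $\mathbb{P}^{1} \to \overline{C}$ has a unique point lying over $\overline{C} \setminus C \subset Q_{n}$, so $\overline{C} \cap Q_{n}$ is set-theoretically a single point with intersection multiplicity $nd$ by B\'ezout. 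The moduli of such degree-$d$ rational curves has dimension $(n+1)(d+1) - 4 - (nd - 1) = n + d - 2$.

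\emph{The main obstacle.} The heart of the argument is to exclude the existence of an $(n-1)$-parameter fibration family of such curves. For $d = 1$, I would count osculating lines through a general point: placing $x$ at the origin of an affine chart and expanding the affine equation of $Q_{n}$ into homogeneous parts $F = F_{0} + F_{1} + \cdots + F_{n}$, the condition that the line through $0$ and $p$ has $n$-fold contact at $p$ reads $F(tp) = c(t-1)^{n}$ for some constant $c$, which by matching coefficients yields the $n$ polynomial equations $F_{k}(p) = (-1)^{k}\binom{n}{k} F_{0}$ in $p$ of respective degrees $1, 2, \ldots, n$. Their B\'ezout product is $n!$, so for $n \geq 3$ there are $n! \geq 6$ osculating lines through each general point of $\mathbb{P}^{n}$, precluding a fibration structure. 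Ruling out $d \geq 2$ is more delicate, since the $(n+d-2)$-dimensional moduli admits $(n-1)$-dimensional subfamilies; the cleanest approach is a logarithmic Mori-theoretic argument on a resolution of $(\mathbb{P}^{n}, Q_{n})$ adapted to the putative fibration, exploiting the identity $K_{\mathbb{P}^{n}} + Q_{n} = -H$ together with the Picard-rank-one structure of $\mathbb{P}^{n}$ to derive a numerical contradiction for the fiber class. This last step ruling out higher-degree fibrations is the main technical challenge.
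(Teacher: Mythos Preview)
Your $\mathbb{A}^{1}$-uniruledness argument via osculating lines is correct and pleasantly more explicit than the paper's: the paper simply invokes Keel--McKernan's criterion that $-(K_{\mathbb{P}^{n}}+Q_{n})=H$ is ample, whereas you exhibit concrete affine lines covering $X$. (Your later B\'ezout count of $n!$ osculating lines through a general point is in fact what confirms dominance of the sweeping map, so the two halves of your write-up fit together.)

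The non-$\mathbb{A}^{1}$-ruledness part, however, is genuinely incomplete, and the paper's route is entirely different from yours.

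Even your $d=1$ case has a gap. Knowing that $n!\geq 6$ osculating lines pass through a general point does not by itself exclude a fibration: a fibration would only select an $(n-1)$-dimensional \emph{sub}family of osculating lines with exactly one member through a general point, and nothing you wrote rules out that the $n!$ solutions split into several algebraic families, one of which has order one. You would need irreducibility of the incidence variety of osculating lines (equivalently, transitivity of the monodromy on the $n!$ solutions), and you have not argued this. More seriously, you explicitly leave $d\geq 2$ open; the gesture toward ``a logarithmic Mori-theoretic argument \ldots\ exploiting $K_{\mathbb{P}^{n}}+Q_{n}=-H$'' is not a proof, and the numerics alone do not obstruct a covering family of higher-degree maximally affine rational curves.

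The paper sidesteps all curve-counting. Since $\mathrm{Cl}(\mathbb{P}^{n}\setminus Q_{n})\simeq\mathbb{Z}/n\mathbb{Z}$ is torsion, any $\mathbb{A}^{1}$-cylinder can be taken to be a principal open, so $\mathbb{A}^{1}$-ruledness is \emph{equivalent} to the existence of a nontrivial algebraic $\mathbb{G}_{a}$-action on $X$. For $n\geq 4$ this is excluded via birational super-rigidity of $Q_{n}$ (Pukhlikov, de Fernex): a $\mathbb{G}_{a}$-action on $X$ yields a one-parameter family of birational self-maps of $\mathbb{P}^{n}$; super-rigidity forces each of these to be a linear automorphism preserving $Q_{n}$, and finiteness of $\mathrm{Aut}(Q_{n})$ then makes the action trivial. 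For $n=3$ super-rigidity fails (the cubic surface is rational), so the paper instead passes to the canonical triple \'etale cover $\tilde{X}=\{F=1\}\subset\mathbb{A}^{4}$, which is an affine open in a smooth cubic threefold; a $\mathbb{G}_{a}$-action would lift to $\tilde{X}$ and, combined with Castelnuovo rationality of the invariant surface, force $\tilde{X}$ to be rational, contradicting Clemens--Griffiths. These deep inputs are the actual content of the proof, and your proposal does not reach them.
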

The anti-ampleness of the divisor $K_{\mathbb{P}^{n}}+Q_n$ enables
to easily deduce the $\mathbb{A}^{1}$-uniruledness of affine varieties
of the form $\mathbb{P}^{n}\setminus Q_{n}$ from the general log-deformation
theory for rational curves developed by Keel and McKernan \cite{KM99}.
The failure of $\mathbb{A}^{1}$-ruledness is then obtained in a more
indirect fashion. Indeed, it turns out that for the varieties under
consideration, $\mathbb{A}^{1}$-ruledness is equivalent to the stronger
property that they admit a non trivial action of the additive group
$\mathbb{G}_{a}$. We then exploit two deep results of projective
geometry, namely the non rationality of the smooth cubic threefold
in $\mathbb{P}^{4}$ in the case $n=3$ and the birational super-rigidity
of smooth hypersurfaces $Q_{n}\subset\mathbb{P}^{n}$ if $n\geq4$,
to exclude the existence of such non trivial actions. 

In the last section, we consider more closely the case of complements
of smooth cubic surfaces in $\mathbb{P}^{3}$ which provides a good
illustration of the subtle but crucial difference between the two
notions of $\mathbb{A}^{1}$-ruledness and $\mathbb{A}^{1}$-uniruledness in higher dimension.
We show in particular that even though such complements are not $\mathbb{A}^{1}$-ruled
they admit natural fibrations by $\mathbb{A}^{1}$-ruled affine surfaces.
We study automorphisms of such fibrations in relation with the problem
of deciding whether every automorphism of the complement of a smooth
cubic surface is induced by a linear transformation of the ambient
space $\mathbb{P}^{3}$. 

\section{Recollection on affine ruled and affine uniruled varieties}

\subsection{Affine ruledness and algebraic $\mathbb{G}_{a}$-actions }

Here we review general properties of affine ruled varieties, with
a particular focus on the interplay between $\mathbb{A}^{1}$-ruledness
of an affine variety and the existence of non trivial algebraic actions
of the additive group $\mathbb{G}_{a}$ on it. We refer the reader 
to \cite{Freu} for basic properties of the correspondence between
such actions on affine varieties and their algebraic counterpart, the so-called locally nilpotent
derivations of their coordinate rings. 
\begin{defn}
A quasi-projective variety $X$ is called \emph{affine ruled} or $\mathbb{A}^{1}$\emph{-ruled}
if it contains a Zariski dense open subset $U$ of the form
$U\simeq\mathbb{A}^{1}\times Y$ for a suitable quasi-projective variety
$Y$. An open subset 
of this type is called an \emph{$\mathbb{A}^{1}$-cylindrical
open subset} of $X$. 
\end{defn}
\begin{parn} An $\mathbb{A}^{1}$-ruled variety $X$ is in particular
ruled in the usual sense, the inclusion $\mathbb{A}^{1}\times Y\hookrightarrow X$
of an $\mathbb{A}^{1}$-cylindrical open subset of $X$ inducing a dominant
birational map $\mathbb{P}^{1}\times Y\dashrightarrow X$. The converse
is not true since for instance the product of the punctured affine
line $\mathbb{A}_{*}^{1}=\mathbb{A}^{1}\setminus\{0\}$ with a smooth
projective curve of positive genus is ruled but does not contain any
$\mathbb{A}^{1}$-cylinder. In particular, in contrast with ruledness,
the property of being $\mathbb{A}^{1}$-ruled is not invariant under
birational equivalence. Note similarly that the existence of an $\mathbb{A}^{1}$-cylindrical
open subset of $X$ is a stronger requirement than that of a dominant
birational morphism $\mathbb{A}^{1}\times Y\rightarrow X$ for a suitable
variety $Y$: for instance, the affine cone $X\subseteq \mathbb{A}^{3}$
over a projective plane curve $C\subseteq \mathbb{P}^{2}$ of positive genus
does not contain $\mathbb{A}^{1}$-cylinders but on the other hand,
blowing-up the vertex of the cone $X$ yields a smooth quasi-projective
variety $\sigma:\tilde{X}\rightarrow X$ with the structure of a locally
trivial $\mathbb{A}^{1}$-bundle $\rho:\tilde{X}\rightarrow C$ hence,
restricting to a subset $Y$ of $C$ over which $\rho$ is trivial,
a dominant birational morphism $\sigma\mid_{{\rho^{-1}}(Y)}:\rho^{-1}(Y)\simeq\mathbb{A}^{1}\times Y\rightarrow X$. 

\end{parn}

\begin{parn} \label{par:Ga-cylinder} Typical examples of $\mathbb{A}^{1}$-ruled
varieties are normal affine varieties $X=\mathrm{Spec}(A)$ admitting
a nontrivial algebraic action of the additive group $\mathbb{G}_{a}$.
Indeed, if $X$ is equipped with such an action induced by a locally
nilpotent $\mathbb{C}$-derivation $\partial$ of $A$, then for every
local slice $f\in\mathrm{Ker}\partial^{2}\setminus\mathrm{Ker}\partial$,
the principal open subset $X_{\partial f}=\mathrm{Spec}(A_{\partial f})$, where $A_{\partial f}$ denotes the localization $A[(\partial f)^{-1}]$ of $A$, is $\mathbb{G}_{a}$-invariant and the morphism 
$\mathbb{G}_{a}\times(V(f)\cap X_{\partial f})\rightarrow X_{\partial f}$
induced by the $\mathbb{G}_{a}$-action on $X$ is an isomorphism.
In particular, $X_{\partial_{f}}$ is a principal $\mathbb{A}^{1}$-cylindrical
open subset of $X$. 
Note on the contrary that the existence of an $\mathbb{A}^{1}$-cylindrical open subset $U\simeq\mathbb{A}^{1}\times Y$ of an affine variety $X=\mathrm{Spec}(A)$
is in general not enough to guarantee that $X$ can be equipped with
a $\mathbb{G}_{a}$-action whose general orbit{\bf s} coincide with the general
fibers of the projection $\mathrm{pr}_{Y}:U\simeq\mathbb{A}^{1}\times Y\rightarrow Y$.
For instance, the general fibers of the projection $\mathrm{pr}_{1}:X=\mathbb{P}^{1}\times\mathbb{P}^{1}\setminus\Delta\rightarrow\mathbb{P}^{1}$,
where $\Delta\subset\mathbb{P}^{1}\times\mathbb{P}^{1}$ denotes the
diagonal, cannot coincides with general orbits of an algebraic $\mathbb{G}_{a}$-action
on $X$. Indeed, otherwise every invariant function on $X$ would
descend to a regular function on $\mathbb{P}^{1}$ and hence would
be constant, in contradiction with the fact that for an affine variety
$X$, the field $\mathrm{Frac}(\Gamma(X,\mathcal{O}_{X})^{\mathbb{G}_{a}})$
has transcendence degree $\dim X-1$ over $\mathbb{C}$. 

In fact, it is classically known that the general fibers of an $\mathbb{A}^{1}$-cylindrical
affine open subset $U\simeq\mathbb{A}^{1}\times Y$ of a normal affine
variety $X=\mathrm{Spec}(A)$ coincide with the general orbits of
an algebraic $\mathbb{G}_{a}$-action on $X$ if and only if $U$
is a principal affine open subset of $X$ (see e.g.  \cite[Proposition 3.1.5]{KPZ}). Let us briefly recall the
argument for the convenience of the reader: the existence of a principal
$\mathbb{A}^{1}$-cylinder in $X$ is equivalent to the existence
of an element $a\in A\setminus\left\{ 0\right\} $ and an isomorphism
of $\mathbb{C}$-algebra $\varphi:A_{a}\stackrel{\sim}{\rightarrow}B[t]$, 
where $t$ is transcendental over $\mathrm{Frac}(B)$. The derivation
$\partial_{0}=\varphi^{*}\frac{\partial}{\partial t}$ of $A_{a}$
is then locally nilpotent and since $A$ is a finitely generated algebra,
there exists $n\geq0$ such that $a^{n}\partial_{0}$ induces a $\mathbb{C}$-derivation
$\partial$ of $A$. Noting that being invertible in $A_{a}$, $a$
is necessarily contained in the kernel of $\partial_{0}$, we conclude
that $\partial$ is a locally nilpotent derivation of $A$, defining
a $\mathbb{G}_{a}$-action on $X$ whose restriction to the principal
invariant open subset $U={\rm Spec}  (A_{a}) \simeq\mathrm{Spec}(B)\times\mathbb{A}^{1}$
is by construction equivariantly isomorphic to that by translations
on the second factor. 

It follows in particular that for a normal affine variety $X$ whose
divisor class group $\mathrm{Cl}(X)$ is torsion, the existence of $\mathbb{A}^{1}$-cylindrical open
subsets is essentially equivalent to that of nontrivial $\mathbb{G}_{a}$-actions.
More precisely, we have the following criterion: 

\end{parn}
\begin{prop}
\label{prop:Ga-Crit} Let $X=\mathrm{Spec}(A)$ be a normal affine
variety such that $\mathrm{Cl}(X)\otimes_{\mathbb{Z}}\mathbb{Q}=0$.
Then for every $\mathbb{A}^{1}$-cylindrical open subset $U\simeq\mathbb{A}^{1}\times Y$
of $X$ there exists an action of $\mathbb{G}_{a}$ on $X$ whose
general orbits coincide with the general fibers of the projection
$\mathrm{pr}_{Y}:U\rightarrow Y$. In particular, $X$ admits an $\mathbb{A}^{1}$-cylinder
open subset if and only if it admits a nontrivial $\mathbb{G}_{a}$-action. \end{prop}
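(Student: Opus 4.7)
My plan is to reduce the proposition to the principal-affine characterization already recalled in paragraph~\ref{par:Ga-cylinder}: on a normal affine variety, the general fibers of an $\mathbb{A}^{1}$-cylindrical open subset coincide with the general orbits of some $\mathbb{G}_{a}$-action exactly when that subset is a principal affine open. The strategy is then to use the hypothesis $\mathrm{Cl}(X)\otimes\mathbb{Q}=0$ to replace the given $U\simeq\mathbb{A}^{1}\times Y$ by a principal affine $\mathbb{A}^{1}$-cylindrical open subset of $X$ whose fibers agree generically with those of $\mathrm{pr}_{Y}$, and then invoke the recalled construction.

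First, since $Y$ is quasi-projective, I would choose an affine open $Y_{0}\subseteq Y$ and set $U_{0}=\mathbb{A}^{1}\times Y_{0}\subseteq U$, which is affine, open in $X$, and whose projection to $Y_{0}$ has general fibers equal to those of $\mathrm{pr}_{Y}$. The decisive step is to upgrade $U_{0}$ to a principal affine open of $X$. Letting $D_{1},\ldots,D_{n}$ be the codimension-one irreducible components of $X\setminus U_{0}$, the torsion hypothesis on $\mathrm{Cl}(X)$ provides positive integers $m_{i}$ and regular functions $f_{i}\in A$ with $\mathrm{div}(f_{i})=m_{i}D_{i}$; setting $f=\prod_{i}f_{i}$ yields $V(f)=\bigcup_{i}D_{i}$, so the principal open $X_{f}$ contains $U_{0}$ with complement $X_{f}\setminus U_{0}$ of codimension at least two in the normal affine variety $X_{f}$. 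Hartogs' extension then gives $A_{f}=\Gamma(X_{f},\mathcal{O})=\Gamma(U_{0},\mathcal{O})=\Gamma(Y_{0},\mathcal{O})[t]$, and since $U_{0}$ and $X_{f}$ are both affine with identified coordinate rings via restriction, the inclusion $U_{0}\hookrightarrow X_{f}$ must be an isomorphism. Hence $U_{0}=X_{f}$ is principal in $X$.

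The construction recalled in paragraph~\ref{par:Ga-cylinder} now applies verbatim: the translation derivation $\partial/\partial t$ on $A_{f}$ is locally nilpotent, and since $A$ is finitely generated, a suitable $f^{N}\partial/\partial t$ restricts to a locally nilpotent $\mathbb{C}$-derivation $\partial$ of $A$ whose associated $\mathbb{G}_{a}$-action on $X$ has general orbits equal to the general fibers of $\mathrm{pr}_{Y_{0}}\colon U_{0}\to Y_{0}$, and hence of $\mathrm{pr}_{Y}$. The ``in particular'' equivalence then combines with the direct implication already noted at the beginning of paragraph~\ref{par:Ga-cylinder}. The single technical pivot is the class-group step: the hypothesis $\mathrm{Cl}(X)\otimes\mathbb{Q}=0$ is used only to produce the function $f$ whose zero locus recovers the divisorial part of $X\setminus U_{0}$, thereby enabling the Hartogs identification; without it this principalization step would fail and no $\mathbb{G}_{a}$-action need descend from the given cylindrical structure.
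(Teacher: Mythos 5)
Your argument is correct and follows essentially the same route as the paper: shrink to an affine cylinder $U_{0}=\mathbb{A}^{1}\times Y_{0}$, use the torsion of $\mathrm{Cl}(X)$ to realize it as a principal open subset $X_{f}$, and then invoke the locally nilpotent derivation construction recalled in $\S$ \ref{par:Ga-cylinder}. The only cosmetic difference is that the paper obtains $X\setminus U_{0}=V(f)$ directly by citing that the complement of an affine open subset of an affine variety is pure of codimension one, whereas you recover the same conclusion a posteriori through the Hartogs comparison $U_{0}=X_{f}$; both are valid.
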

\begin{proof}
By replacing $Y$ by an affine open subset of it, we may assume that
$U$ is affine whence that its complement $D=X\setminus U$
has pure codimension one in $X$ as $X$ itself is affine. The hypothesis
guarantees precisely that $D$ is the support of a principal divisor
and hence the assertions follow immediately from the discussion above. 
\end{proof}
\subsection{Basic facts on log uniruled quasi-projective varieties}
\begin{defn}
A quasi-projective variety $X$ is called \emph{$\mathbb{A}^{1}$-uniruled},
or (\emph{properly})\emph{ log-uniruled} after \cite{KM99} if it
contains a Zariski dense open subset $U$ with the property
that for every $x\in U$, there exists a non constant morphism $f_{x}:\mathbb{A}^{1}\rightarrow X$
such that $x\in f_{x}(\mathbb{A}^{1})$. 
\end{defn}
\begin{parn} Equivalently, $X$ is $\mathbb{A}^{1}$-uniruled if
through a general point there exists a maximally affine rational curve,
that is, an affine rational curve whose normalization is isomorphic to
the affine line $\mathbb{A}^{1}$. An $\mathbb{A}^{1}$-uniruled variety is
in particular uniruled in the usual sense, i.e. there exists a dominant,
generically finite rational map $\mathbb{P}^{1}\times Y\dashrightarrow X$
for a suitable quasi-projective variety $Y$. More precisely, letting $(V,D)$
be a pair consisting of a projective model $V$ of $X$ and a, possibly
empty, boundary divisor $D$ such that $V$ is smooth along
$D$, it follows from \cite[5.1]{KM99} that there exists a closed
sub-scheme $Y$ of $\mathrm{Mor}(\mathbb{P}^{1},V)$ consisting
of morphisms $f:\mathbb{P}^{1}\rightarrow V$ with the property
that $f^{-1}(f(\mathbb{P}^{1})\cap D)$ consists of at most one point
and on which the restriction of the canonical evaluation morphism
$\mathrm{ev}:\mathbb{P}^{1}\times\mathrm{Mor}(\mathbb{P}^{1},V)\rightarrow V$
induces a dominant morphism $\mathbb{P}^{1}\times Y\rightarrow V$. 

\end{parn}

\begin{parn} It is not difficult to check that a smooth $\mathbb{A}^{1}$-uniruled
quasi-projective variety $X$ has logarithmic Kodaira dimension $\overline{\kappa}(X)=-\infty$
(see e.g. \cite[5.11]{KM99}). The converse holds true in dimension
$\leq2$ thanks to the successive works of Miyanishi-Sugie \cite{MS80}
and Keel-McKernan \cite{KM99} and in fact, for a smooth affine surface
$X$, the three conditions $\overline{\kappa}(X)=-\infty$, $X$ is
$\mathbb{A}^{1}$-uniruled and $X$ is $\mathbb{A}^{1}$-ruled turn
out to be equivalent to each other. Furthermore, the surface $X$
has then the stronger property that every of its point belongs to
a maximally affine rational curve. This follows from the fact that
the projection $\mathrm{pr}_{Y}:U\simeq\mathbb{A}^{1}\times Y\rightarrow Y$
from an $\mathbb{A}^{1}$-cylindrical open subset $U$ extends to a fibration $p:X\rightarrow Z$
over an open subset of a smooth projective model of $Y$, with general
fibers isomorphic to $\mathbb{A}^{1}$ and whose degenerate fibers
consist of disjoint unions of affine lines (see e.g. \cite{Mi81}).  
Meanwhile, much less is known in higher dimension where one lacks in particular
a good logarithmic analogue of Mori's Bend-and-Break techniques. Nevertheless,
the following simple criterion due to Keel and McKernan \cite[Corollary 5.4]{KM99}
enables to easily confirm $\mathbb{A}^{1}$-ruledness of certain smooth
affine varieties and will be enough for our purpose: 

\end{parn}
\begin{thm}
\label{thm:Log-Uni-Crit} Let $X$ be a smooth affine variety and
let $X\hookrightarrow(V,D)$ be a projective completion where $V$
is a smooth projective variety and $D=V\setminus X$ a reduced divisor
on $V$. If $-(K_{V}+D)$ is ample then $X$ is $\mathbb{A}^{1}$-uniruled. \end{thm}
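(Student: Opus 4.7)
The plan is to establish that through a general point $x \in X$ there exists a non-constant morphism $f: \mathbb{P}^1 \to V$ whose preimage $f^{-1}(D)$ is supported at a single point of $\mathbb{P}^1$. Any such $f$ then restricts to a non-constant morphism $\mathbb{A}^1 \to X$ whose image contains $x$, exhibiting $x$ as a point on a maximally affine rational curve in $X$ and thereby witnessing $\mathbb{A}^1$-uniruledness.

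Since $D$ is effective and $-(K_V+D)$ is ample, the anticanonical divisor $-K_V = -(K_V+D) + D$ is big, so $V$ is uniruled by the Miyaoka--Mori theorem and a rational curve passes through every point. Fixing a general $x \in X$, I consider non-constant morphisms $f:\mathbb{P}^1 \to V$ whose image contains $x$ and pick $f_0$ minimising the intersection number $d := f_{0*}[\mathbb{P}^1] \cdot D$; the minimum is realised because the ampleness of $-(K_V+D)$ bounds the $D$-degree of rational curves in covering families on the log-Fano pair $(V,D)$. Since $X$ is affine and contains no complete rational curve, $d \geq 1$. The aim is to show $d = 1$.

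Assume for contradiction $d \geq 2$ and pick $q_0 \in f_0^{-1}(x)$ together with two distinct points $q_1, q_2 \in f_0^{-1}(D)$. A standard Euler-characteristic estimate bounds the dimension of the space of morphisms $g:\mathbb{P}^1 \to V$ with $g(q_0) = x$ and $g(q_i) \in D$ for $i=1,2$ from below at $[f_0]$ by
\[
-(K_V+D) \cdot f_{0*}[\mathbb{P}^1] \,+\, d \,-\, 2 \;\geq\; 1,
\]
using the ampleness of $-(K_V+D)$. A non-trivial one-parameter deformation of $f_0$ satisfying these boundary-tangency conditions therefore exists, and the log version of Mori's bend-and-break forces it to degenerate into a reducible rational curve $C_1 \cup C_2$. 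Each $C_i$ must meet $D$, for otherwise it would be a complete rational curve inside the affine $X$; one of the two, say $C_1$, still contains $x$, and hence satisfies $C_1 \cdot D \leq d - 1 < d$, contradicting the minimality of $d$.

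The principal technical step is the log bend-and-break degeneration, namely turning the Euler-characteristic-positive deformation space into an honest family whose limit is a reducible curve on which the marked points $q_0, q_1, q_2$ distribute so as to drop $d$. This is the content of the log-deformation machinery of Keel and McKernan, and the statement under consideration is in fact \cite[Corollary 5.4]{KM99}.
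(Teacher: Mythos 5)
The paper gives no proof of this statement at all: it is quoted as \cite[Corollary 5.4]{KM99}, and your proposal, after sketching the bend-and-break heuristic behind it, ultimately defers the decisive log-deformation step to that same reference, so in substance you are taking the same route. One small logical slip worth fixing: if $d\geq 2$ you cannot necessarily choose two \emph{distinct} points $q_{1},q_{2}\in f_{0}^{-1}(D)$ (the preimage may be a single point of multiplicity $d$), so the correct goal is not $d=1$ but rather that $f_{0}^{-1}(D)$ is supported at one point --- in the excluded case the curve already restricts to the required morphism $\mathbb{A}^{1}\rightarrow X$, so the argument closes, but the dichotomy should be phrased on $\#f_{0}^{-1}(D)$ rather than on $d$.
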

\begin{example}
\label{Ex:Log-Uni-Ex} The above criterion guarantees for instance
that for every $n\geq1$, the complement $X$ of a hypersurface $D\subseteq \mathbb{P}^{n}$
of degree $d\leq n$ is $\mathbb{A}^{1}$-uniruled. In dimension $2$,
we recover in particular the fact that the complement of smooth conic
$Q_{2}$ in $\mathbb{P}^{2}$ is $\mathbb{A}^{1}$-uniruled, actually
$\mathbb{A}^{1}$-ruled say for instance by the restriction to $\mathbb{P}^{2}\setminus Q_{2}$
of the rational pencil generated by $Q_{2}$ and two times its tangent line at a given point.
On the other hand, even though the criterion does not require $D$
to be an SNC divisor, it does not enable to deduce the fact that the
complement of a cuspidal cubic $D$ is $\mathbb{A}^{1}$-uniruled,
in fact again $\mathbb{A}^{1}$-ruled, namely via the restriction
of the rational pencil on $\mathbb{P}^{2}$ generated by $D$ and three
times its tangent line at the singular point. 
\end{example}
\section{Complement of smooth hypersurfaces of degree $n$ in $\mathbb{P}^{n}$,
$n\geq3$ }

According to Theorem \ref{thm:Log-Uni-Crit}, the complement $X$
of a hypersurface $Q_{n}$ of degree $n$ in $\mathbb{P}^{n}$,
$n\geq2$, is a smooth $\mathbb{A}^{1}$-uniruled affine variety.
If $n=2$, then $Q_{2}$ is {\bf a} smooth conic whose complement is in fact
$\mathbb{A}^{1}$-ruled (see Example \ref{Ex:Log-Uni-Ex}). Here we
show in contrast that for every $n\geq3$, the complement of a smooth
hypersurface $Q_{n}\subseteq \mathbb{P}^{n}$ is not $\mathbb{A}^{1}$-ruled. 
It is worthwhile to note that the strategy to establish this fact in case of $n=3$ is quite different from that of the case of $n\ge 4$. 

\subsection{The case $n\geq4$ }

Here we will exploit the birational super-rigidity of smooth hypersurfaces
$Q_{n}\subset\mathbb{P}^{n}$ of degree $n\geq 4$ to deduce that the corresponding
affine varieties $X=\mathbb{P}^{n}\setminus Q_{n}$ are not $\mathbb{A}^{1}$-ruled.
Let us first briefly recall the notion of birational super-rigidity
for a class of variety which is enough for our needs (see e.g. \cite{Pu98}
for the general definition). 
\begin{defn}
\label{Def:Bir-SupRig} A smooth Fano variety $V$ with $\mathrm{Pic}(V)\simeq\mathbb{Z}$
is called \emph{birationaly} \emph{super-rigid} if the following two
conditions hold:

a) The variety $V$ is not birational to a fibration $V'\rightarrow S$ onto a variety of ${\rm dim} S >0$
whose general fibers are smooth varieties with Kodaira dimension $-\infty$, 

b) Any birational map from $V$ to another smooth Fano variety $V'$
with $\mathrm{Pic}(V')\simeq\mathbb{Z}$ is a biregular isomorphism. 
\end{defn}
\noindent The essential ingredient we will use is the following result,
first established by Pukhlikov \cite{Pu98} under suitable generality
assumptions and recently extended to arbitrary smooth hypersurfaces
by De Fernex \cite{dF06}: 
\begin{thm}\label{thmdF}
Let $n\geq4$ and let $Q_{n}\subseteq \mathbb{P}^{n}$ be a smooth hypersurface
of degree $n$. Then $Q_n$ is birationaly super-rigid. \end{thm}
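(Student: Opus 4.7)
The plan is to follow the classical method of maximal singularities (Iskovskikh-Manin, Pukhlikov) and argue by contradiction. Assume that $\chi:Q_n\dashrightarrow V'$ is a birational map violating either condition (a) or (b) of Definition \ref{Def:Bir-SupRig}. First I would choose a suitable movable linear system $|H'|$ on $V'$: a multiple of the positive generator of $\mathrm{Pic}(V')$ in case (b), or the pullback of a very ample class from the base $S$ in case (a). Let $|M|=\chi^{-1}_{*}|H'|$ be its proper transform on $Q_n$. By the Lefschetz hyperplane theorem, $\mathrm{Pic}(Q_n)=\mathbb{Z}\cdot H|_{Q_n}$ for $n\geq 4$, so $M\sim_{\mathbb{Q}}\mu H|_{Q_n}$ for some $\mu>0$. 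Comparing canonical classes on a common resolution of $\chi$ and using the $\mathrm{Pic}$-hypothesis on $V'$, the Noether-Fano inequality implies that the log pair $(Q_n,\frac{1}{\mu}M)$ is not canonical: there exists a divisorial valuation $\nu$ over $Q_n$ with $\mathrm{mult}_{\nu}(M)>\mu\cdot a(\nu,Q_n)$. Its center $Z\subset Q_n$, a so-called maximal singularity of $|M|$, has codimension at least $2$ since $|M|$ is movable.

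The second step is to derive a numerical contradiction from the existence of $Z$. The strategy is to confront the global intersection number $(M_1\cdot M_2)\cdot H^{n-3}|_{Q_n}=n\mu^{2}$, obtained from two generic members $M_1,M_2\in|M|$, with a lower bound for the multiplicity $\mathrm{mult}_{Z}(M_1\cdot M_2)$ forced by $\nu$. For a center $Z$ of codimension exactly two, the classical $4\mu^{2}$-inequality of Iskovskikh-Manin yields $\mathrm{mult}_{Z}(M_1\cdot M_2)>4\mu^{2}$, which already exceeds the global degree once one takes into account the degree of $Z$ in $\mathbb{P}^n$. For higher-codimensional centers, one cuts $Q_n$ by generic hyperplanes through $Z$ in order to reduce inductively to the codimension-two situation.

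The main obstacle, and the precise point at which De Fernex refines Pukhlikov's original argument, is the handling of centers $Z$ of codimension $\geq 3$ on arbitrary, non-generic smooth hypersurfaces. Pukhlikov's cut-down argument requires genericity hypotheses in order to ensure that the hyperplane sections passing through $Z$ remain sufficiently well behaved, leaving certain special hypersurfaces untreated. De Fernex's contribution is to replace this cut-down step by multiplier-ideal and log canonical threshold techniques, establishing unconditionally the inequality $\mathrm{lct}(Q_n;\frac{1}{\mu}|M|)\geq 1$ for every movable linear system $|M|$ on $Q_n$, which directly contradicts the non-canonical conclusion of Noether-Fano. Throughout, the Fano-index-one identity $-K_{Q_n}=H|_{Q_n}$ is used essentially: for hypersurfaces of degree $d<n$ in $\mathbb{P}^{n}$ the analogous numerical inequalities fail, which is why super-rigidity does not extend to the lower-degree range.
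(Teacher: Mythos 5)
First, a point of comparison: the paper does not prove Theorem \ref{thmdF} at all --- it is imported as a black box from Pukhlikov \cite{Pu98} and de Fernex \cite{dF06} --- so there is no internal proof to measure yours against. Your first step is a correct account of the framework used in those references: take a movable system $|M|=\chi^{-1}_{*}|H'|$, write $M\sim_{\mathbb{Q}}\mu H|_{Q_{n}}$ using $\mathrm{Pic}(Q_{n})=\mathbb{Z}$ (Lefschetz, valid since $\dim Q_{n}=n-1\geq3$), and apply the Noether--Fano inequality to produce a maximal singularity with center $Z$ of codimension at least $2$.

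The gap is that the exclusion of maximal centers, which is the entire mathematical content of the theorem, is only named, not carried out, and the two places where you gesture at it contain errors. (i) The inequalities are assigned to the wrong cases: for a center $Z$ of codimension $2$ the relevant estimate is the elementary $\mathrm{mult}_{Z}M>\mu$, which against $M_{1}\cdot M_{2}\cdot H^{n-3}=n\mu^{2}$ only yields $\deg Z<n$ and must then be confronted with the geometry of low-degree codimension-two subvarieties of a smooth $Q_{n}$; the $4\mu^{2}$-inequality is the tool for centers of codimension $\geq3$ (applied at a point after cutting by generic hyperplanes), and even there $\mathrm{mult}_{x}(M_{1}\cdot M_{2})>4\mu^{2}$ does not contradict the global degree $n\mu^{2}$ once $n\geq5$; Pukhlikov's hypertangent-divisor technique exists precisely to supply the missing multiplicity bounds, and it is absent from your sketch. (ii) Your summary of de Fernex's contribution is both circular and imprecise: proving $\mathrm{lct}(Q_{n};\frac{1}{\mu}|M|)\geq1$ would only give that the pair is \emph{log canonical}, which does not contradict the Noether--Fano conclusion that it fails to be \emph{canonical}; and in any case asserting that the pair is canonical for every movable $|M|$ is essentially a restatement of superrigidity, so invoking it without explaining how the multiplier-ideal and connectedness arguments establish it assumes what is to be proved. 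As a roadmap to \cite{Pu98} and \cite{dF06} your paragraph is serviceable; as a proof it omits exactly the estimates that make the statement true in degree $n$ and false in lower degree.
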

\begin{rem}
\label{Rk:BirSupRig-Auto} A noteworthy consequence of the above result
is that every birational map $Q_{n}\dashrightarrow Q_{n}'\subset\mathbb{P}^{n}$
between smooth hypersurfaces of degree $n$ is in fact a biregular isomorphism
which has the additional property to be induced by the restriction
of a linear transformation of the ambient space $\mathbb{P}^{n}$.
This follows from the fact that anti-canonical divisors on $Q_{n}$
coincide with hyperplane sections of $Q_{n}$. In particular, the
group $\mathrm{Bir}(Q_{n})$ of birational automorphisms of $Q_{n}$
coincides with that of projective automorphisms $\mathrm{Lin}(Q_{n})$
and the latter is a finite group. 
\end{rem}
\noindent Now we are ready to prove the following:
\begin{prop}
\label{prop:Non-ruled-n4} The complement of every smooth hypersurface
$Q_{n}\subseteq \mathbb{P}^{n}$ of degree $n\geq4$ is $\mathbb{A}^{1}$-uniruled
but not $\mathbb{A}^{1}$-ruled. \end{prop}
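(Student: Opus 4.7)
The plan proceeds in two essentially independent steps. The $\mathbb{A}^{1}$-uniruledness is immediate from Theorem \ref{thm:Log-Uni-Crit}: with $H$ the hyperplane class on $\mathbb{P}^{n}$, we have $K_{\mathbb{P}^{n}}\sim -(n+1)H$ and $Q_{n}\sim nH$, so $-(K_{\mathbb{P}^{n}}+Q_{n})\sim H$ is ample and the criterion applies to the pair $(\mathbb{P}^{n},Q_{n})$. For the failure of $\mathbb{A}^{1}$-ruledness, the first move is to reduce it to a statement about algebraic group actions: since $Q_{n}\sim nH$, the divisor class group $\mathrm{Cl}(X)\simeq\mathbb{Z}/n\mathbb{Z}$ is torsion, so Proposition \ref{prop:Ga-Crit} reduces the problem to showing that $X=\mathbb{P}^{n}\setminus Q_{n}$ admits no non-trivial $\mathbb{G}_{a}$-action.

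Suppose for contradiction that such an action does exist, with rational orbit quotient $Z:=X/\!/\mathbb{G}_{a}$ of dimension $n-1$, so that $X\sim_{\mathrm{bir}}\mathbb{A}^{1}\times Z$. Extending the action rationally to $\mathbb{P}^{n}$, general orbit closures are rational curves meeting $Q_{n}$ in exactly one point. Fixing a projective completion $\overline{Z}$ of $Z$ yields a birational map $\mathbb{P}^{1}\times\overline{Z}\dashrightarrow\mathbb{P}^{n}$; the irreducible divisor $Q_{n}$ must correspond to one irreducible component of the boundary of $\mathbb{A}^{1}\times Z$ inside $\mathbb{P}^{1}\times\overline{Z}$. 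The ``vertical'' options $\mathbb{P}^{1}\times E$, with $E$ an irreducible divisor in $\overline{Z}\setminus Z$, would display $Q_{n}$ as birational to a $\mathbb{P}^{1}$-bundle over the positive-dimensional base $E$, which is directly forbidden by property (a) of birational super-rigidity (Theorem \ref{thmdF}). One is therefore reduced to the ``horizontal'' case $\overline{Z}\sim_{\mathrm{bir}}Q_{n}$, giving $\mathbb{P}^{n}\sim_{\mathrm{bir}}\mathbb{P}^{1}\times Q_{n}$, with $Q_{n}\hookrightarrow\mathbb{P}^{n}$ serving as a rational section of the induced $\mathbb{P}^{1}$-fibration $\mathbb{P}^{n}\dashrightarrow Q_{n}$.

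The main obstacle is then the exclusion of this remaining possibility. The strategy would be to resolve the rational $\mathbb{P}^{1}$-fibration $\mathbb{P}^{n}\dashrightarrow Q_{n}$ to a smooth projective model $V\to Q_{n}$ with $V\sim_{\mathrm{bir}}\mathbb{P}^{n}$, run a relative minimal model program over $Q_{n}$ to arrive at a conic bundle Mori fiber space, and extract a numerical contradiction via the Noether--Fano inequalities underlying the Pukhlikov--de Fernex super-rigidity of Theorem \ref{thmdF}. The delicate point is that super-rigidity as stated in Definition \ref{Def:Bir-SupRig} controls birational self-maps having $Q_{n}$ as total space of the Mori fiber space $Q_{n}\to\mathrm{Spec}\,\mathbb{C}$, whereas here $Q_{n}$ appears instead as the base of a non-trivial fibration coming from a rational variety of one dimension higher; making this translation rigorous via a Sarkisov-type analysis of the link between the two Mori fiber spaces $\mathbb{P}^{n}\to\mathrm{Spec}\,\mathbb{C}$ and $V\to Q_{n}$ is where the technical core of the proof resides.
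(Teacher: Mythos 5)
Your first two steps (ampleness of $-(K_{\mathbb{P}^{n}}+Q_{n})$ for uniruledness, and the reduction via $\mathrm{Cl}(X)\simeq\mathbb{Z}/n\mathbb{Z}$ and Proposition \ref{prop:Ga-Crit} to the non-existence of a nontrivial $\mathbb{G}_{a}$-action) match the paper exactly and are fine. The problem is the third step. By passing to the quotient $Z=X/\!/\mathbb{G}_{a}$ you convert the question into one about the rational $\mathbb{P}^{1}$-fibration $\mathbb{P}^{n}\dashrightarrow\overline{Z}$, and after disposing of the vertical components you are left having to exclude $\mathbb{P}^{n}\sim_{\mathrm{bir}}\mathbb{P}^{1}\times Q_{n}$. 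This residual case is a genuine gap, not merely a technical one you have deferred: it amounts to ruling out that $Q_{n}$ is stably rational of level one, and birational super-rigidity in the form of Definition \ref{Def:Bir-SupRig} says nothing about fibrations in which $Q_{n}$ appears as the \emph{base} rather than as (something birational to) the total space. For $n=4$ this is essentially the stable rationality problem for smooth quartic threefolds, which is not settled by Theorem \ref{thmdF} and is far harder than the statement you are trying to prove; the proposed "Sarkisov-type analysis" is not a routine completion, since $\mathbb{P}^{n}$ carries many Mori fiber space structures and nothing in the Noether--Fano machinery as quoted forbids $Q_{n}$ from occurring as a base. (You also silently assume the proper transform of $Q_{n}$ in $\mathbb{P}^{1}\times\overline{Z}$ is one of the listed boundary divisors, i.e.\ that $Q_{n}$ is not contracted by the birational map; that case needs a separate, if easier, argument.)

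The paper's proof sidesteps the quotient entirely and thereby never meets this obstacle. Instead of one fibration, it uses the one-parameter family of automorphisms $\varphi_{t}$ of $X$ given by the flow of the action, viewed as birational self-maps $\Phi_{t}$ of $\mathbb{P}^{n}$. Since each $\Phi_{t}$ restricts to an isomorphism of $X$, the only divisor it could possibly contract is $Q_{n}$ itself; if some $\Phi_{t}$ were strictly birational it could not be an isomorphism in codimension one, so on a resolution $\mathbb{P}^{n}\stackrel{p}{\leftarrow}V\stackrel{q}{\rightarrow}\mathbb{P}^{n}$ by smooth blow-ups the strict transform of $Q_{n}$ would be $p$-exceptional, hence birationally ruled over a lower-dimensional center --- and now super-rigidity applies in exactly the form of condition (a), because it is $Q_{n}$ that is exhibited as (birational to) a ruled total space. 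Hence every $\Phi_{t}$ is linear and preserves $Q_{n}$, and finiteness of $\mathrm{Aut}(Q_{n})$ (Remark \ref{Rk:BirSupRig-Auto}) together with the fact that $Q_{n}$ spans $\mathbb{P}^{n}$ forces the action to be trivial. If you want to salvage your write-up, replace your third step by this argument; as it stands the horizontal case is not excluded.
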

\begin{proof}
The $\mathbb{A}^{1}$-uniruledness of $X=\mathbb{P}^{n}\setminus Q_{n}$
is an immediate consequence of Theorem \ref{thm:Log-Uni-Crit}. Since $\mathrm{Cl}(X)=\mathbb{Z}/n\mathbb{Z}$,
it follows from Proposition \ref{prop:Ga-Crit} that the $\mathbb{A}^{1}$-ruledness
of $X$ is equivalent to the existence of a non trivial algebraic
$\mathbb{G}_{a}$-action on it. Every given $\mathbb{G}_{a}$-action on
$X$ induces a one parameter family of automorphisms $\{\varphi_{t}\}_{t\in\mathbb{G}_{a}}$
of $X$ that we interpret through the open inclusion $X=\mathbb{P}^{n}\setminus Q_{n}\hookrightarrow\mathbb{P}^{n}$
as birational automorphisms $\Phi_{t}:\mathbb{P}^{n}\dashrightarrow\mathbb{P}^{n}$,
$t\in\mathbb{G}_{a}$. Since $Q_{n}$ is birationally super-rigid by virtue of Theorem \ref{thmdF} above,
it follows that every $\Phi_{t}$ is in fact a biregular automorphism.
Indeed, otherwise, noting that $\Phi_{t}$ cannot be an isomorphism in
codimension one (see e.g. \cite{Cor95}) and letting $\mathbb{P}^{n}\stackrel{p}{\leftarrow}V\stackrel{q}{\rightarrow}\mathbb{P}^{n}$
be a resolution of it, where $p$ consists of successive blow-ups
of smooth centers, it would follow that the proper transform $q_{*}^{-1}Q_{n}$
of $Q_{n}$ in $V$ is an exceptional divisor of $p$ whence is birationally ruled
in contradiction with condition a) in Definition \ref{Def:Bir-SupRig}.
Therefore the $\mathbb{G}_{a}$-action on $X$ extends to a linear
one on $\mathbb{P}^{n}$ which leaves $X$ whence $Q_{n}$ invariant.
Since the automorphism group $\mathrm{Aut}(Q_{n})$ is finite (see
Remark \ref{Rk:BirSupRig-Auto} above), the induced $\mathbb{G}_{a}$-action
on $Q_{n}$ is the trivial one and so, being linear, the $\mathbb{G}_{a}$-action
on $\mathbb{P}^{n}$ extending that on $X$ is trivial on the linear
span of $Q_{n}$. But $Q_{n}$ is obviously not contained in any hyperplane
of $\mathbb{P}^{n}$ and hence the initial $\mathbb{G}_{a}$-action
on $X$ is necessarily trivial, which completes the proof. 
\end{proof}

\subsection{The case $n=3$: complements of smooth cubic surfaces in $\mathbb{P}^{3}$ }

Since a smooth cubic surface $Q_{3}$ in $\mathbb{P}^{3}$ is rational, 
the same argument as in the previous section depending on birational
super-rigidity is no longer applicable to deduce the non $\mathbb{A}^{1}$-ruledness
of its complement $X=\mathbb{P}^{3}\setminus Q_{3}$. Instead, we
will derive it from the unirationality but non rationality of smooth
cubic threefolds in $\mathbb{P}^{4}$ together with a suitable finite
\'etale covering trick. 

The divisor class group of the complement $X$
of a smooth cubic surface $Q_{3}\subseteq \mathbb{P}^{3}$ being isomorphic
to $\mathbb{Z}/3\mathbb{Z}$ generated by the class of a hyperplane
section, it follows again from Proposition \ref{prop:Ga-Crit} that
$X$ is $\mathbb{A}^{1}$-ruled if and only if it admits a non trivial
algebraic $\mathbb{G}_{a}$-action. On the other hand, since algebraic
$\mathbb{G}_{a}$-actions lift under finite \'etale covers (see e.g.
\cite{Sei66}), to establish the non $\mathbb{A}^{1}$-ruledness of
$X$ it is enough to exhibit a finite \'etale cover $\pi:\tilde{X}\rightarrow X$
of $X$ with the property that $\tilde{X}$ does not admit any non
trivial algebraic $\mathbb{G}_{a}$-action. 
\begin{prop}\label{prop1}
Let $X=\mathbb{P}^{3}\setminus Q_{3}$ be the complement of a smooth
cubic surface and let $\pi:\tilde{X}\rightarrow X$ be the canonical
\'etale Galois cover of degree three associated with the canonical
sheaf $\omega_{X}\simeq\mathcal{O}_{\mathbb{P}^{1}}(-1)\mid_{X}$
of $X$. Then $\tilde{X}$ is a smooth affine threefold which does
not admit any non trivial algebraic $\mathbb{G}_{a}$-action. Consequently,
$X$ is not $\mathbb{A}^{1}$-ruled. \end{prop}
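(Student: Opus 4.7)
The first step in my proof plan is to identify $\tilde X$ explicitly. Writing $Q_3=\{F=0\}$ with $F\in\mathbb{C}[x_0,\ldots,x_3]$ a cubic form, the isomorphism $\omega_X^{\otimes 3}\simeq\mathcal{O}_{\mathbb{P}^3}(-3)|_X$ is trivialized by the section $F$, which is nowhere zero on $X$. The corresponding cyclic triple cover is thus obtained by adjoining a cube root of $F$. Concretely, I would identify $\tilde X$ with the complement in
\[
Y=\{x_4^3-F(x_0,\ldots,x_3)=0\}\subseteq\mathbb{P}^4
\]
of its hyperplane section $H=Y\cap\{x_4=0\}$, the linear projection $[x_0:\cdots:x_4]\mapsto[x_0:\cdots:x_3]$ (from the point $[0{:}0{:}0{:}0{:}1]\notin Y$) restricting to the Galois étale triple cover $\pi\colon\tilde X\to X$. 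Smoothness of $Q_3$ implies that $Y$ is a smooth cubic threefold in $\mathbb{P}^4$, and since $H$ is an ample Cartier divisor on $Y$, the complement $\tilde X=Y\setminus H$ is smooth and affine.

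The heart of the argument would then be to exclude a non-trivial algebraic $\mathbb{G}_a$-action on $\tilde X$. Suppose for contradiction that such an action existed. As recalled in \ref{par:Ga-cylinder}, any $\mathbb{G}_a$-action on a normal affine variety produces a principal $\mathbb{A}^1$-cylindrical open subset, so $\tilde X$ would be birational to $\mathbb{A}^1\times S$ for some quasi-projective surface $S$. Since $\tilde X$ is open in $Y$, the smooth cubic threefold $Y$ itself would then be birational to $\mathbb{A}^1\times S$. But smooth cubic threefolds are classically known to be unirational, so composing a dominant rational map $\mathbb{P}^3\dashrightarrow Y$ with the projection to $S$ shows that $S$ is unirational, hence rational by Castelnuovo's theorem on unirational surfaces. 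This would force $Y$ to be birational to $\mathbb{A}^1\times\mathbb{P}^2$, i.e.\ rational, contradicting the celebrated non-rationality theorem of Clemens--Griffiths for smooth cubic threefolds in $\mathbb{P}^4$.

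For the final assertion, I would invoke the two ingredients recalled in the excerpt: algebraic $\mathbb{G}_a$-actions lift along finite étale covers \cite{Sei66}, so the non-existence of such an action on $\tilde X$ implies the same for $X$, and Proposition \ref{prop:Ga-Crit} applied to $X$ (whose divisor class group is $\mathbb{Z}/3\mathbb{Z}$) then rules out any $\mathbb{A}^1$-cylindrical open subset of $X$. The main conceptual obstacle of the plan is the input from projective birational geometry: everything hinges on the Clemens--Griffiths non-rationality of the smooth cubic threefold $Y$, which is precisely what makes the canonical étale cover the right tool to detect non-$\mathbb{A}^1$-ruledness in this case. The remaining steps are either explicit coordinate computations identifying $\tilde X\subset Y$, or direct consequences of the general theory developed in Section~1 combined with the standard fact that unirational complex surfaces are rational.
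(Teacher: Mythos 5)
Your proposal is correct and follows essentially the same route as the paper: identify $\tilde X$ as the affine piece $\{F=1\}$ of the smooth cubic threefold $\{x_4^3=F\}\subseteq\mathbb{P}^4$, then derive a contradiction between the rationality forced by a $\mathbb{G}_a$-cylinder (via unirationality of the cubic threefold and Castelnuovo) and the Clemens--Griffiths non-rationality theorem, and finally lift the conclusion to $X$ via \cite{Sei66} and Proposition \ref{prop:Ga-Crit}. The only cosmetic difference is that the paper routes the argument through the algebraic quotient $\tilde X\rightarrow\mathrm{Spec}(\Gamma(\tilde X,\mathcal{O}_{\tilde X})^{\mathbb{G}_a})$ using Zariski's finiteness theorem before producing the cylinder, whereas you invoke the local-slice cylinder directly, which is equally valid.
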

\begin{proof}
Letting $Q_{3}\subseteq \mathbb{P}^{3}=\mathrm{Proj}(\mathbb{C}[x,y,z,u])$
be defined as the zero locus of a homogeneous polynomial $F(x,y,z,u)\in\mathbb{C}[x,y,z,u]$
of degree three, it is straightforward to check that the canonical triple \'etale covering $\tilde{X}$ of $X$
is isomorphic to the affine variety $\mathbb{A}^{4}=\mathrm{Spec}(\mathbb{C}[x,y,z,u])$
defined by the equation $F(x,y,z,u)=1$, and that the morphism $\pi:\tilde{X}\rightarrow X$ 
coincides with the restriction of the natural morphism
$\mathbb{A}^{4}\setminus\left\{ 0\right\} \rightarrow\mathbb{P}^{3}$,
$(x,y,z,u)\mapsto[x:y:z:u]$. Note that by construction, $\tilde{X}$ is an open subset of the smooth cubic threefold $V\subseteq \mathrm{Proj}(\mathbb{C}[x,y,z,u,v])$ with equation $F(x,y,z,u)-v^{3}=0$, hence is unirational but not rational by virtue of a famous result of Clemens-Griffiths \cite{CM72}. Now suppose that
there exists a non trivial algebraic $\mathbb{G}_{a}$-action on $\tilde{X}$.
Then by virtue of \cite{Zar54}, the algebra of invariants $\Gamma(\tilde{X},\mathcal{O}_{\tilde{X}})^{\mathbb{G}_{a}}$
is a finitely generated integrally closed domain of transcendence
degree two over $\mathbb{C}$. Letting $q:\tilde{X}\rightarrow\tilde{Z}=\mathrm{Spec}(\Gamma(\tilde{X},\mathcal{O}_{\tilde{X}})^{\mathbb{G}_{a}})$ be the corresponding quotient morphism, the unirationality of $\tilde{X}$
implies that of $\tilde{Z}$ whence its rationality since these two
notions coincide in dimension two. But then the existence of a principal
affine open subset $\tilde{Z}_{a}$ of $\tilde{Z}$ such that $q^{-1}(\tilde{Z}_{a})\simeq\mathbb{A}^{1}\times\tilde{Z}_{a}$
(see $\S$ \ref{par:Ga-cylinder} above) would imply in turn that $\tilde{X}$
itself is rational, a contradiction.\end{proof}

\begin{rem}
Complements of singular cubic surfaces $Q_{3}\subseteq\mathbb{P}^{3}$
may turn out to be $\mathbb{A}^{1}$-ruled. For instance, it is shown
in \cite{KPZ11} that if $Q_{3}$ has Du Val singularities worse than
$A_{2}$ then $\mathbb{P}^{3}\setminus Q_{3}$ admits a non trivial
algebraic $\mathbb{G}_{a}$-action. It may even happen that $\mathbb{P}^{3}\setminus Q_{3}$
contains an $\mathbb{A}^{2}$-cylinder, i.e., an open subset of the
form $\mathbb{A}^{2}\times Y$ for a smooth curve $Y$. This holds
for instance for the complement of the cubic surface $Q\subseteq \mathbb{P}^{3}=\mathrm{Proj}(\mathbb{C}[x,y,z,u])$
defined as the vanishing locus of the homogeneous polynomial $F(x,y,z,u)=yu^{2}+z(xz+y^{2})$
which has a unique isolated singularity $\left[1:0:0:0\right]$ of
type $D_{5}$. Indeed, noting that $f(x,y,z)=F(x,y,z,1)$ is a component
of the Nagata automorphism \cite{Na78}, it follows that the general
fibers of the morphism $\rho:\mathbb{P}^{3}\setminus Q\rightarrow\mathbb{A}^{1}$
induced by the restriction of the rational pencil on $\mathbb{P}^{3}$
generated by $Q$ and three times the hyperplane $\{u=0\}$
are isomorphic to $\mathbb{A}^{2}$ hence, by virtue of \cite{KZ01},
that there exists an open subset $Y\subseteq \mathbb{A}^{1}$ such that
$\rho^{-1}(Y)\simeq\mathbb{A}^{2}\times Y$. A similar construction
holds for all normal cubic surfaces listed in \cite{Oh01} which arise
as closures in $\mathbb{P}^{3}$ of zero sets of components of automorphisms
of $\mathbb{A}^{3}$. 
\end{rem}
\section{Rigid rational fibrations on complements of smooth cubic surfaces
in $\mathbb{P}^{3}$}

In contrast with the higher dimensional case where a similar argument
as the one used in the proof of Proposition \ref{prop:Non-ruled-n4}
shows that for every smooth hypersurface $Q_{n}$ of the degree $n$
in $\mathbb{P}^{n}$, $n\geq4$, the group $\mathrm{Aut}(\mathbb{P}^{n}\setminus Q_{n})$ 
is embeded as a subgroup of $\mathrm{Aut}(Q_{n})$ hence consists of a
finite group of linear transformations, it is not known whether every
automorphism of the complement of a smooth cubic surface $Q=Q_3$ in $\mathbb{P}^{3}$
is induced by a linear transformation of $\mathbb{P}^{3}$. 

Even though ${\mathbb P}^3\backslash Q$ is not $\mathbb{A}^{1}$-ruled (cf. Proposition \ref{prop1}), it turns out that it is fibered in a natural way by $\mathbb{A}^{1}$-ruled affine surfaces whose general closures in
$\mathbb{P}^{3}$ are smooth cubic surfaces. Since $\mathbb{A}^{1}$-ruled
affine surfaces are usually good candidates for having non trivial
automorphisms, one can expect that these fibrations have interesting
automorphisms, in particular automorphisms induced by strictly birational
transformations of the ambient space $\mathbb{P}^{3}$. 

In this section, we first review the construction of these fibrations
$\rho:\mathbb{P}^{3}\setminus Q\rightarrow\mathbb{A}^{1}$ by ${\mathbb A}^1$-ruled affine surfaces. We describe
the automorphism groups of their general fibers and we check in particular
that for a general smooth cubic hypersurface $Q\subseteq {\mathbb P}^3$, these fibers do indeed carry interesting automorphisms
induced by strictly birational Geiser involutions of their projective
closures. We show in contrast that every automorphism of the full
fibration comes as the restriction of a linear transformation
of $\mathbb{P}^{3}$. 

\subsection{Special rational pencils on the complement of a smooth cubic surface}\label{3.1}

Given a smooth cubic surface $Q\subseteq \mathbb{P}^{3}$ and a line
$L$ on it, the restriction to $Q$ of the rational pencil $\mathcal{H}_{L}=\left|\mathcal{O}_{\mathbb{P}^{3}}(1)\otimes\mathcal{I}_{L}\right|$ on $\mathbb{P}^{3}$ generated by planes containing $L$ can be decomposed
as $\mathcal{H}_{L}\mid_{Q}=\mathcal{L}+L$ where $\mathcal{L}$ is
a base point free pencil defining a conic bundle $\Phi_{\mathcal{L}}:Q\rightarrow\mathbb{P}^{1}$
with five degenerate fibers, each consisting of the union of two $\left(-1\right)$-curves
intersecting transversally. The restriction $\Phi_{\mathcal{L}}\mid_{L}:L\rightarrow\mathbb{P}^{1}$
is a double cover and for every branch point $x\in\mathbb{P}^{1}$
of $\Phi_{\mathcal{L}}\mid_{L}$, the intersection of $Q$ with the
corresponding hyperplane $H_{x}\in\mathcal{H}_L$ consists either of
a smooth conic tangent to $L$ or of two distinct lines intersecting
$L$ in a same point, which is called an Eckardt point of $Q$. Letting
say $H_{0}(L)$ and $H_{\infty}(L)$ be the planes in $\mathcal{H}_{L}$
such that $\Phi_{\mathcal{L}}\mid_{L}$ is ramified over the points
$\Phi_{\mathcal{L}}(H_{t}(L)\mid_{Q}-L)$, $t=0,\infty$, the divisors
$Q$ and $3H_{t}(L)$ generate a pencil of cubic surfaces $\overline{\rho}_{t}(L):\mathbb{P}^{3}\dashrightarrow\mathbb{P}^{1}$
with a unique multiple member $3H_{t}(L)$ and whose general members
are smooth cubic surfaces. 

\begin{parn} \label{par:Log-res} From now on we consider a pencil
$\overline{\rho}=\overline{\rho}_{t}(L)$ as above associated to a fixed 
line $L\subseteq Q$ and a fixed distinguished plane $H\in\mathcal{H}_{L}$
intersecting $Q$ either along the union $L\cup C$ of a line and
conic intersecting each other in a single point $p$ or along the
union $L\cup L_{1}\cup L_{2}$ of three lines meeting at an Eckardt
point $p$ of $Q$. We denote by $\rho:\mathbb{P}^{3}\setminus Q\rightarrow\mathbb{A}^{1}$
the morphism induced by the restriction of $\overline{\rho}$ to the
complement of $Q$. By construction, the closure in $\mathbb{P}^{3}$
of a general fiber $S$ of $\rho$ is a smooth cubic surface $V$
such that the reduced divisor $D=H\cap V$ has either the form $D=L+C$
or $D=L+L_{1}+L_{2}$. In each case we denote by $\alpha:W\rightarrow V$
the blow-up of $p$, with exceptional divisor $E$. If $p$ is an
Eckardt point, then we let $\tilde{V}=W$ and we denote by $\tilde{D}\subseteq \tilde{V}$
the reduced divisor $\alpha^{-1}(D)_{\mathrm{red}}=L+L_{1}+L_{2}+E$.
Otherwise, if $p$ is not an Eckardt point, then we let $\tilde{V}$
be the variety obtained from $W$ by blowing-up further the intersection
point of $E$ and of the proper transform of $C$, say with exceptional
divisor $E_{2}$, and we let $\tilde{D}\subseteq \tilde{V}$ be the reduced
divisor $\tilde{D}=L+C+E+E_{2}$. By construction, $\tilde{D}$ is
an SNC divisor and the induced birational morphism $\sigma:(\tilde{V},\tilde{D})\rightarrow(V,D)$
provides a minimal log-resolution of the pair $\left(V,D\right)$. 

\end{parn}

\noindent The following lemma summarizes basic properties of the
general fibers of fibrations of the form $\rho_{t}(L):\mathbb{P}^{3}\setminus Q\rightarrow\mathbb{A}^{1}$,
$t=0,\infty$. 
\begin{lem}\label{basic} 
For a general fiber $S=V\setminus D$ of
$\rho_{t}(L):\mathbb{P}^{3}\setminus Q\rightarrow\mathbb{A}^{1}$,
the following holds:

a) $S$ is a smooth affine surface with a trivial canonical sheaf and
logarithmic Kodaira dimension $\overline{\kappa}(S)=-\infty$, 

b) If $D=L+C$ (resp. $D=L+L_{1}+L_{2}$) then the Picard group of
$S$ is isomorphic to $\mathbb{Z}^{5}$ \emph{(}resp. $\mathbb{Z}^{4}$\emph{)}. \end{lem}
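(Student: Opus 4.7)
The plan is to combine the log-resolution $\sigma:(\tilde V,\tilde D)\to(V,D)$ of \S\ref{par:Log-res} with the classical Picard-lattice description of the smooth cubic surface $V$.

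For (a), I first check that a general member $V$ of the pencil is a smooth cubic surface: Bertini handles points away from the base locus $Q\cap H$, while at a base point $x$ the defining equation $aH^{3}-bF$ of $V$ has gradient $-b\nabla F(x)\neq 0$ since $Q$ is smooth and $b\neq 0$. Then $D=V\cap H$ is a hyperplane section of $V$, and so $S=V\setminus D$ is smooth and affine. Adjunction gives $K_V\cong\mathcal{O}_V(-1)$, whence $K_V+D\sim 0$ and $K_S=K_V|_S$ is trivial. For the log Kodaira dimension, I compute the log-canonical class on $\tilde V$: in the Eckardt case, blowing up the triple point of $D$ gives $\tilde D=\sigma^{*}D-2E$ and $K_{\tilde V}=\sigma^{*}K_V+E$, so
\[
K_{\tilde V}+\tilde D=\sigma^{*}(K_V+D)-E=-E,
\]
while an analogous two-step computation in the tangent case yields $K_{\tilde V}+\tilde D=-E_{2}$. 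In both cases, any effective divisor $D'\sim -mE$ (resp. $-mE_{2}$) pushes down to zero on $V$, hence is supported on the exceptional locus and of the form $kE$ with $k\geq 0$; then $(k+m)E\sim 0$ together with $E^{2}=-1$ forces $m=0$, so all log-plurigenera vanish.

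For (b), I use the standard restriction exact sequence
\[
\bigoplus_{i}\mathbb{Z}[D_{i}]\longrightarrow\mathrm{Pic}(V)\longrightarrow\mathrm{Pic}(S)\longrightarrow 0
\]
and analyse the sublattice $\Lambda$ spanned by the components of $D$ inside $\mathrm{Pic}(V)\simeq\mathbb{Z}^{7}$, written in the standard basis $\ell,e_{1},\ldots,e_{6}$ coming from a realisation of $V$ as the blow-up of six points of $\mathbb{P}^{2}$. For the rank, the intersection matrix of $[L],[L_{1}],[L_{2}]$ (with $L_{i}^{2}=-1$ and $L_{i}\cdot L_{j}=1$ for $i\neq j$) has determinant $4$, and that of $[L],[C]$ (with $L^{2}=-1$, $C^{2}=0$, $L\cdot C=2$) has determinant $-4$; both being non-degenerate, this yields $\mathrm{rk}\,\mathrm{Pic}(S)=4$ and $5$ respectively. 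For torsion-freeness, I exhibit a maximal-rank minor equal to $\pm 1$ in the matrix of generators of $\Lambda$: taking the tritangent triple $L=\ell-e_{1}-e_{2}$, $L_{1}=\ell-e_{3}-e_{4}$, $L_{2}=\ell-e_{5}-e_{6}$ in the Eckardt case, the $3\times 3$ minor on the columns indexed by $e_{1},e_{3},e_{5}$ equals $-1$; while taking $L=e_{1}$ and $C=3\ell-2e_{1}-e_{2}-\cdots-e_{6}$ in the tangent case, the $2\times 2$ minor on the columns $e_{1},e_{2}$ equals $-1$. Hence $\Lambda$ is a primitive sublattice of $\mathrm{Pic}(V)$ and $\mathrm{Pic}(S)$ is free of the announced rank.

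The most delicate step is this saturation argument in (b): the rank count and the log-canonical computation proceed mechanically from adjunction and intersection numbers, but ruling out torsion in $\mathrm{Pic}(S)$ requires the explicit identification of the classes of $L,L_1,L_2$ and $C$ in a standard basis of the Picard lattice, which rests on the classical theory of the $27$ lines and tritangent planes on a smooth cubic surface.
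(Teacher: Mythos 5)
Your proof is correct. Part (a) follows the paper essentially verbatim: affineness from $D$ being a hyperplane section, triviality of $\omega_S$ by adjunction, and $K_{\tilde V}+\tilde D=-E$ (resp. $-E_2$) on the minimal log-resolution, with the vanishing of all log-plurigenera because the class is minus an effective $\sigma$-exceptional divisor; your added Bertini-plus-base-locus check that the general member $V$ is smooth makes explicit something the paper only asserts in \S \ref{3.1}. Part (b) is where you genuinely diverge. The paper works with the conic bundle $\Phi_{\mathcal L}:V\to\mathbb{P}^1$ attached to $L$ and the contraction $\tau:V\to\mathbb{P}^1\times\mathbb{P}^1$, and argues that the components of $D$ sit inside a seven-element generating set of $\mathrm{Cl}(V)\simeq\mathbb{Z}^7$, so that the remaining exceptional curves generate a free quotient; you instead use the classical marking $\mathrm{Pic}(V)=\mathbb{Z}\ell\oplus\mathbb{Z}e_1\oplus\cdots\oplus\mathbb{Z}e_6$, compute the rank of the sublattice $\Lambda=\langle[D_i]\rangle$ from the (nondegenerate) intersection matrices, and prove freeness of the quotient by exhibiting a unimodular maximal minor, i.e.\ by proving $\Lambda$ is primitive. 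Both routes reduce to the same saturation statement, but your minor computation is the more robust way to certify it: it isolates exactly what is needed (gcd of maximal minors equal to $1$), whereas the paper's generating-set formulation requires keeping careful track of the class of $L$, which is $\tau^{*}f_1+2\tau^{*}f_2-\sum E_i$ and hence only generates the $f_2$-direction up to index $2$ together with the exceptional curves --- the primitivity of $\Lambda$ itself is what saves the conclusion, and that is precisely what you verify directly. The one point you should make explicit is the normalization of the classes: you write $L+L_1+L_2=(\ell-e_1-e_2)+(\ell-e_3-e_4)+(\ell-e_5-e_6)$ and $L=e_1$, $C=3\ell-2e_1-e_2-\cdots-e_6$, but a tritangent plane may also have the combinatorial type $\{e_i,\ \ell-e_i-e_j,\ 2\ell-\sum_{k\neq j}e_k\}$. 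Since $W(E_6)$ acts on $K_V^{\perp}$ by lattice isometries transitively on the $27$ lines and on the $45$ tritangent planes, primitivity is independent of the representative, so either invoke this transitivity to justify the ``without loss of generality'', or simply repeat the (equally easy) minor computation for the second type; with that sentence added the argument is complete.
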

\begin{proof}
The affineness of $S$ is clear as $D$ is a hyperplane section of
$V$. Since $V$ is a smooth cubic surface, it follows from adjunction
formula that $\omega_{V}\simeq\omega_{\mathbb{P}^{3}}\otimes\mathcal{O}_{\mathbb{P}^{3}}(3)\mid_{V}\simeq\mathcal{O}_{\mathbb{P}^{3}}(-1)\mid_{V}\simeq\mathcal{O}_{V}(-D)$.
This implies in turn the triviality of the canonical sheaf of $S$ as $\omega_{S}\simeq\omega_{V}\mid_{S}\simeq\mathcal{O}_{V}(-D)\mid_{S}\simeq\mathcal{O}_{S}$.
We may identify $S$ with $\tilde{V}\setminus\tilde{D}$ via the birational
morphism $\sigma:(\tilde{V},\tilde{D})\rightarrow(V,D)$ constructed
above. Since $\tilde{D}$ is an SNC divisor by construction, we have
$\overline{\kappa}(S)=\kappa(\tilde{V},K_{\tilde{V}}+\tilde{D})$.
Using the fact that $-D$ is a canonical divisor on $V$, we deduce
from the logarithmic ramification formula for $\sigma$ that 
\[
\begin{cases}
K_{\tilde{V}}+\tilde{D}=-E & \textrm{if }D=L+L_{1}+L_{2}\\
K_{\tilde{V}}+\tilde{D}=-E_{2} & \textrm{if }D=L+C.
\end{cases}
\]
Since $E$ and $E_{2}$ are exceptional divisors, one has $H^{0}(\tilde{V},m(K_{\tilde{V}}+\tilde{D}))=0$
for every $m>0$ and hence $\overline{\kappa}(S)=-\infty$. 

To determine the Picard group of $S$, we will exploit the conic bundle
structure $\Phi_{\mathcal{L}}:V\rightarrow\mathbb{P}^{1}$ described
in the beginning of \ref{3.1}. By contracting a suitable irreducible component of each of
the five degenerate fibers of $\Phi_{\mathcal{L}}:V\rightarrow\mathbb{P}^{1}$,
we obtain a birational morphism $\tau:V\rightarrow\mathbb{P}^{1}\times\mathbb{P}^{1}$
fitting into a commutative diagram 
\begin{eqnarray*}
V & \stackrel{\tau}{\rightarrow} & \mathbb{P}^{1}\times\mathbb{P}^{1}\\
\Phi_{\mathcal{L}}\downarrow &  & \downarrow\mathrm{pr}_{1}\\
\mathbb{P}^{1} & = & \mathbb{P}^{1}
\end{eqnarray*}
and such that the proper transform of $L$ is a smooth $2$-section
of $\mathrm{pr}_{1}$ with self-intersection $4$, whence a section
of the second projection $\mathrm{pr}_{2}:\mathbb{P}^{1}\times\mathbb{P}^{1}\rightarrow\mathbb{P}^{1}$.
We can then identify the divisor class group $\mathrm{Cl}(V)\simeq\mathbb{Z}^{7}$
of $V$ with the group generated by the proper transforms of a fiber
of $\mathrm{pr_{1}}$, a fiber of $\mathrm{pr}_{2}$, and the five
exceptional divisors of $\tau$. The proper transform $\tau_{*}(D)$
of $D$ in $\mathbb{P}^{1}\times\mathbb{P}^{1}$ is the union of the
proper transform of $L$ and of a fiber of $\mathrm{pr}_{1}$. Thus
if $D=L+C$ then $L$ and $C$ together with the five exceptional
divisors of $\tau$ generate $\mathrm{Cl}(V)$ and hence $\mathrm{Cl}(S)\simeq\mathrm{Pic}(S)$
is isomorphic to $\mathbb{Z}^{5}$ generated by the classes of the
intersections of these exceptional divisors with $S$. Similarly,
if $D=L+L_{1}+L_{2}$ then $L_{1}$ or $L_{2}$, say $L_{2}$, is
an exceptional divisor of $\tau$ and so $\mathrm{Cl}(V)$ is generated
by $L$, $L_{1}$, $L_{2}$ and the four other exceptional divisors
of $\tau$. This implies in turn that $\mathrm{Cl}(S)\simeq\mathrm{Pic}(S)$
is isomorphic to $\mathbb{Z}^{4}$ generated by the classes of the
intersections of these four exceptional divisors with $S$. 
\end{proof}
\begin{parn} \label{Rk:A1-fib} Since a general fiber $S=V\setminus D$ of $\rho_{t}(L):\mathbb{P}^{3}\setminus Q\rightarrow\mathbb{A}^{1}$ has logarithmic Kodaira dimension $-\infty$ by Lemma \ref{basic},  
it is $\mathbb{A}^{1}$-ruled and hence, by virtue of \cite{Mi81}, it
admits an $\mathbb{A}^{1}$-fibration $q:S\rightarrow C$ over a smooth
curve. Let us briefly explain how to construct such fibrations using
the birational morphism $\tau:V\rightarrow\mathbb{P}^{1}\times\mathbb{P}^{1}$
which contracts an irreducible component of each of the five degenerate
fibers of $\Phi_{\mathcal{L}}:V\rightarrow\mathbb{P}^{1}$ as described
in the proof of the previous Lemma. According to the configuration of $D$, we have the following: 

a) If $D=L+C$ where $C$ is a smooth conic, then each of the points blown-up
by $\tau$ is the intersection point of the proper transform of $L$
with a fiber of $\mathrm{pr}_{2}:\mathbb{P}^{1}\times\mathbb{P}^{1}\rightarrow\mathbb{P}^{1}$.
The proper transforms $F_{1},\ldots,F_{5}\subseteq  V$ of these fibers
are disjoint $\left(-1\right)$-curves which do not intersect $L$.
Let $\mu:V\rightarrow\mathbb{P}^{2}$ be the contraction of $L$ and
$F_{1},\ldots,F_{5}$. Since each $F_{i}$, $i=1,\ldots,5$ intersects
$C$ transversally and $L$ is tangent to $C$, the image of $C$
in $\mathbb{P}^{2}$ is a cuspidal cubic. The rational pencil on $\mathbb{P}^{2}$
generated by the image of $C$ and three times its tangent line $T$ at
its unique singular point lifts to a rational pencil $\overline{q}:V\dashrightarrow\mathbb{P}^{1}$
having the divisors $C+\sum_{i=1}^{5}F_{i}$ and $3T+L$ as singular
members. Its restriction to $S=V\setminus D$ is an $\mathbb{A}^{1}$-fibration
$q:S\rightarrow\mathbb{P}^{1}$ with two degenerate fibers: one is
irreducible of multiplicity three consisting of the intersection of
the proper transform of $T$ with $S$ and the other is reduced, consisting
of the disjoint union of the curves $F_{i}\cap S\simeq\mathbb{A}^{1}$,
$i=1,\ldots,5$. 

b) The construction in the case where $D=L+L_{1}+L_{2}$ is very similar: 
we may suppose that $L_{1}$ is contracted by $\tau$ so that contracting
the same irreducible components $F_{1},\ldots,F_{4}$ of the degenerate
fibers of $\Phi_{\mathcal{L}}:V\rightarrow\mathbb{P}^{1}$ but $L_{2}$
instead of $L_{1}$, we obtain a birational morphism $\tau_{1}:V\rightarrow\mathbb{F}_{1}$.
The image of $L$ in $\mathbb{F}_{1}$ is a smooth $2$-section of
the $\mathbb{P}^{1}$-bundle structure $\pi_{1}:\mathbb{F}_{1}\rightarrow\mathbb{P}^{1}$
with self-intersection $4$. Letting $C_{0}$ be the exceptional section
of $\pi_{1}$ with self-intersection $\left(-1\right)$, one has necessarily
$L\sim2C_{0}+2F$ where $F$ is a fiber of $\pi_{1}$. So $L$ does
not intersect $C_{0}$ and its image in $\mathbb{P}^{2}$ by the morphism
$q:\mathbb{F}_{1}\rightarrow\mathbb{P}^{2}$ contracting $C_{0}$
is a smooth conic. Since $L_{1}$
is tangent to $L$ in $\mathbb{F}_{1}$, its image in $\mathbb{P}^{2}$
is the tangent to the image of $C$ at a point distinct from the one
blown-up by $q$. The rational pencil on $\mathbb{P}^{2}$ generated
by the image of $L$ and two times that of $L_{1}$ lifts to a rational
pencil $\overline{\rho}:V\dashrightarrow\mathbb{P}^{1}$ having $2L_{1}+L_{2}+2C_{0}$
and $L+\sum_{i=1}^{4}F_{i}$ as degenerate members. Its restriction
to $S=V\setminus D$ yields an $\mathbb{A}^{1}$-fibration $\rho:S\rightarrow\mathbb{P}^{1}$
with two degenerate fibers : one is irreducible of multiplicity two
consisting of the intersection of the proper transform of $C_{0}$
with $S$ and the other is reduced, consisting of the disjoint union
of the curves $F_{i}\cap S\simeq\mathbb{A}^{1}$, $i=1,\ldots,4$. 

\end{parn}
\begin{rem}
It follows from Corollary \ref{cor:Auto-desc} below that the automorphism
group of a smooth affine surface $S=V\setminus D$ as above is finite.
In particular, such surfaces are $\mathbb{A}^{1}$-ruled but do not
admit non trivial $\mathbb{G}_{a}$-actions and so the complement
of a smooth cubic surface $Q\subseteq \mathbb{P}^{3}$ is a smooth affine
$\mathbb{A}^{1}$-uniruled threefold without non trivial $\mathbb{G}_{a}$-action
which has the structure of a family $\rho_{t}(L):\mathbb{P}^{3}\setminus Q\rightarrow\mathbb{A}^{1}$
of $\mathbb{A}^{1}$-ruled affine surfaces without non trivial $\mathbb{G}_{a}$-actions.
In contrast, the total space of a family of affine varieties with
non trivial $\mathbb{G}_{a}$-actions often admits itself a non trivial
$\mathbb{G}_{a}$-action. For instance, suppose that $\varphi:X=\mathrm{Spec}(B)\rightarrow S=\mathrm{Spec}(A)$
is a dominant morphism between complex affine varieties and that $\partial:B\rightarrow B$
is an $A$-derivation of $B$ which is locally nilpotent in restriction
to general closed fibers of $\varphi$, defining non trivial $\mathbb{G}_{a}$-actions
on these fibers. Then $\partial$ is a locally nilpotent $A$-derivation
of $B$ and hence $X$ carries a non trivial $\mathbb{G}_{a}$-action.
Indeed, up to shrinking $S$ if necessary, we may assume that for
every maximal ideal $\mathfrak{m}\in\mathrm{Specmax}(A)$, the
$A/\mathfrak{m}$-derivation $\partial_{\mathfrak{m}}:B/\mathfrak{m}B\rightarrow B/\mathfrak{m}B$
induced by $\partial$ is locally nilpotent. Given an element $f\in B$
and an integer $n\geq0$, denote by $K^{n}(f)$ the closed sub-variety
of $S$ whose points are the maximal ideals $\mathfrak{m}\in\mathrm{\mathrm{Specmax}(A)}$
for which the residue class of $f$ in $B/\mathfrak{m}B$ belongs
to $\mathrm{Ker}\partial_{\mathfrak{m}}^{n}$. The hypothesis implies
that $S$ is equal to the increasing union of its closed sub-variety
$K^{n}(f)$, $n\geq0$, and so this sequence must stabilizes as the
base field $\mathbb{C}$ is uncountable. So there exists $n_{0}=n_{0}(f)$
such that the restriction of $\partial^{n_{0}}f$ to every closed
fiber of $\varphi:X\rightarrow S$ is the zero function, and hence $\partial^{n_{0}}f=0$
since these fibers form a dense subset of $X$. 
\end{rem}

\subsection{Automorphisms of general fibers of special rational pencils}\label{3.2} 

This sub-section is devoted to the study of automorphisms of general
fibers of the rational fibrations $\rho:\mathbb{P}^{3}\setminus Q\rightarrow\mathbb{A}^{1}$
constructed as above. We show in particular that such general fibers
admit biregular involutions induced by Geiser involutions of their
projective closures. 

\begin{parn} To simplify the discussion, let us call a pair $\left(V,D\right)$ as in 
\ref{par:Log-res} \emph{special} if $V$ is a smooth cubic hypersurface in ${\mathbb P}^3$ 
and $D$ is composed of three concurrent lines $L+L_{1}+L_{2}$ meeting in an Eckardt 
point $p$ of $V$, or of the union of $L$ and a smooth conic $C$
intersecting $L$ with multiplicity two in a single point $p$. 
Similarly
as in $\S$ \ref{par:Log-res}, we denote by $\alpha:W\rightarrow V$
the blow-up of $p$ and we denote by $E$ its exceptional divisor. 

By construction, $W$ is a weak del Pezzo surface (i.e., 
$-K_{W}$ is nef and big) of degree $2$  on which the anticanonical linear system
$|-K_{W}|$ defines a morphism $\theta:W\rightarrow\mathbb{P}^{2}$.
The latter factors into a birational morphism $W\rightarrow Y$ contracting
$L$ (resp. $L$, $L_{1}$ and $L_{2}$) if $D=L+C$ (resp. if $D=L+L_{1}+L_{2})$
followed by a Galois double covering $Y\rightarrow\mathbb{P}^{2}$
ramified over a quartic curve $\Delta$ with a unique double point
(resp. three double points) located at the image of $L$ (resp. at
the images of $L$, $L_{1}$ and $L_{2}$). 
The non trivial involution of the double covering $Y\rightarrow\mathbb{P}^{2}$
induces an involution $G_{W}:W\rightarrow W$ fixing $L$ and exchanging
the proper transform of $D$ and the exceptional divisor $E$ (resp.
fixing $L$, $L_{1}$ and $L_{2}$). The latter descends either to
birational involution $G_{V,p}:V\dashrightarrow V$ if $D=L+C$, or
to a biregular involution $G_{V,p}:V\rightarrow V$ having $p$ as
an isolated fixed point if $D=L+L_{1}+L_{2}$. 

In each case, we say that $G_{V,p}$ is the \emph{Geiser involution
of $V$ with center at $p$}. By construction, $G_{V,p}$ restricts
further to a biregular involution $j_{G_{V,p}}$ of the affine surface
$S=V\setminus D$ which we call the \emph{affine Geiser involution
of $S$ with center at $p$}. 

\end{parn}
\begin{prop}
\label{prop:Iso-desc} Let $\varphi:S=V\setminus D\stackrel{\sim}{\rightarrow}S'=V'\setminus D'$
be an isomorphism between affine surfaces associated to special pairs
$\left(V,D\right)$ and $\left(V',D'\right)$ respectively. Then the
following assertions hold:

a) If either $D$ or $D'$ consists of three concurrent lines then
$\varphi$ extends to an isomorphism of pairs $\overline{\varphi}:\left(V,D\right)\stackrel{\sim}{\rightarrow}(V',D')$.

b) Otherwise, if both $D=L+C$ and $D'=L'+C'$ consist of the union
of a line and  a conic, then the induced birational map $\overline{\varphi}:V\dashrightarrow V'$
is either an isomorphism of pairs, or it can be decomposed as $\overline{\varphi}=\Psi\circ G_{V,p}$
or $\overline{\varphi}=G_{V',p'}\circ\Psi'$ where $\Psi,\Psi'$ are
isomorphisms of pairs and where $G_{V,p}$ and $G_{V',p'}$ denote
the Geiser involutions of $V$ and $V'$ with centers are $p=L\cap C$
and $p'=L'\cap C'$ respectively. Furthermore, every birational map
$V\dashrightarrow V'$ of the form \textup{$\Psi\circ G_{V,p}$ (resp.
$G_{V',p'}\circ\Psi'$) can be uniquely re-written in the form $G_{V',p'}\circ\Psi'$
(resp. $\Psi\circ G_{V,p}$). }In particular, if there exists an isomorphism $\varphi:S=V\setminus D\stackrel{\sim}{\rightarrow}S'=V'\setminus D'$
then $V$ and $V'$ are isomorphic smooth cubic surfaces. \end{prop}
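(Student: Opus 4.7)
The plan is to lift $\varphi:S \to S'$ to a birational map $\tilde\varphi:\tilde V \dashrightarrow \tilde V'$ between the minimal log resolutions $\sigma:(\tilde V,\tilde D)\to(V,D)$ and $\sigma':(\tilde V',\tilde D')\to(V',D')$ constructed in \ref{par:Log-res}, and then analyze when this lift is biregular by examining the weighted dual graphs of the SNC boundaries. When $D=L+L_1+L_2$, $\tilde D$ is a star with central $(-1)$-curve $E$ joined to three $(-2)$-leaves $L,L_1,L_2$; when $D=L+C$, $\tilde D$ is a star with central $(-1)$-curve $E_2$ joined to a $(-3)$-leaf $L$ and two $(-2)$-leaves $C$ and $E$. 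Since $\varphi$ restricts to a biregular isomorphism of the complements of $\tilde D$ and $\tilde D'$, one can resolve $\tilde\varphi$ by a common smooth model $\pi:Z\to\tilde V$, $\pi':Z\to\tilde V'$ with centers of blow-ups lying entirely on the boundaries; the morphisms $\pi,\pi'$ then correspond to two sequences of successive contractions of $(-1)$-components producing the two minimal SNC completions $(\tilde V,\tilde D)$ and $(\tilde V',\tilde D')$ of the same surface $S$.

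The task therefore reduces to enumerating the minimal SNC completions of $S$ reachable from $(\tilde V,\tilde D)$ by elementary boundary links (blow up a node of the boundary, then contract a resulting $(-1)$-component). For part (a), the three-line graph $(-1;-2,-2,-2)$ is rigid under such links: blowing up any node of the star turns the central $(-1)$-curve into a $(-2)$-curve and one $(-2)$-leaf into a $(-3)$-curve while introducing a new $(-1)$-curve at that node, and no sequence of admissible contractions restores the original weighted graph. Hence $\tilde\varphi$ must be biregular and descends to an isomorphism of pairs $\overline\varphi:(V,D)\stackrel{\sim}{\to}(V',D')$; in particular the boundary types must match, so the apparent mixed case is vacuous.

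For part (b), the line--conic weighted graph admits exactly one non-trivial automorphism, the transposition of the two $(-2)$-leaves $C$ and $E$, and this transposition is realized by the Geiser involution $G_{V,p}$: on the blow-up $W$, the involution $G_W$ fixes $L$ and exchanges $C$ and $E$, hence lifts to an involution of $(\tilde V,\tilde D)$ acting on the dual graph as this transposition. The rigidity analysis of part (a), applied to the rigidified weighted graph, then shows that after possibly composing with $G_{V,p}$ on the source or $G_{V',p'}$ on the target the map $\tilde\varphi$ preserves the weighted dual graph and is biregular. This yields either an isomorphism of pairs $\overline\varphi$, or a factorization $\overline\varphi=\Psi\circ G_{V,p}$ with $\Psi$ such an isomorphism. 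The interchangeability with the form $G_{V',p'}\circ\Psi'$ follows since any isomorphism of pairs sends the tangency point $p=L\cap C$ to $p'=L'\cap C'$, giving $G_{V',p'}\circ\Psi=\Psi\circ G_{V,p}$ by equivariance of the Geiser construction, hence $\Psi'=\Psi$ and uniqueness of the rewriting. The final assertion $V\cong V'$ is immediate from the existence of a biregular $\Psi:(V,D)\stackrel{\sim}{\to}(V',D')$ in either case.

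The main obstacle will be the rigidity step: verifying that, apart from the unique Geiser transposition in case (b), no sequence of elementary boundary links relates two distinct minimal SNC completions of $S$. This reduces to a finite enumeration of the effects of successive blow-ups and contractions on the two weighted star graphs, but requires careful self-intersection bookkeeping to rule out each non-trivial alternative and to confirm that the Geiser involution is the only non-trivial biregular self-correspondence realising a weighted graph automorphism.
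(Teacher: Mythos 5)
Your setup coincides with the paper's: lift $\varphi$ to a birational map $\tilde{\varphi}:\tilde{V}\dashrightarrow\tilde{V}'$ between the minimal log-resolutions of \S\ \ref{par:Log-res}, read off the boundary combinatorics (your weighted graphs $(-1;-2,-2,-2)$ and $(-1;-3,-2,-2)$ are correct), and let the Geiser involution account for the transposition of $C$ and $E$. But the heart of the proof --- that $\tilde{\varphi}$ is a biregular isomorphism of pairs $(\tilde{V},\tilde{D})\stackrel{\sim}{\rightarrow}(\tilde{V}',\tilde{D}')$ --- is exactly the step you defer as ``the main obstacle,'' so as written the argument is incomplete precisely where it matters. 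Moreover, the route you propose for closing it is shakier than it needs to be: a birational map between SNC completions restricting to an isomorphism of the interiors does not in general factor into ``elementary boundary links'' (blow up a node, contract one $(-1)$-component), so the reduction to a finite enumeration of such links on the star graphs would itself require justification; in general one must work with the full minimal resolution $\tilde{V}\stackrel{\beta}{\leftarrow}X\stackrel{\beta'}{\rightarrow}\tilde{V}'$, whose centers may include free points of boundary components and infinitely near points.

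The paper closes this step in two lines, and you should adopt that argument: in the minimal resolution, $\beta'$ factors into contractions of $(-1)$-curves supported on the total boundary $B$; any such curve that is not $\beta$-exceptional is the proper transform of a component of $\tilde{D}$, and since blowing up only lowers self-intersections, the only candidate is the central curve $E$ (resp.\ $E_{2}$) left untouched by $\beta$ --- all other components already have self-intersection $\leq-2$. But that curve meets three other branches of the boundary, so contracting it produces a non-SNC point, a contradiction. Hence $\tilde{\varphi}$ is biregular outright (no Geiser correction is needed at the level of $\tilde{V}$: the Geiser involution lifts to a \emph{biregular} automorphism of $(\tilde{V},\tilde{D})$, so ``composing with it to make $\tilde{\varphi}$ biregular'' is the wrong dichotomy). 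The alternative in part b) only appears when descending the induced isomorphism $W\rightarrow W'$ back to $V\dashrightarrow V'$, according to whether $E\mapsto E'$ or $E\mapsto C'$. Your remaining points --- the vacuity of the mixed case in a), the identification of the graph transposition with $G_{V,p}$, and the uniqueness of the rewriting via $\Psi\circ G_{V,p}=G_{V',\Psi(p)}\circ\Psi$ --- are correct and consistent with the paper.
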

\begin{proof}
Let $\sigma:(\tilde{V},\tilde{D})\rightarrow(V,D)$ and $\sigma':(\tilde{V}',\tilde{D}')\rightarrow(V',D')$
be the minimal log-resolutions of the pairs $\left(V,D\right)$ and 
$(V',D')$ respectively described in $\S$ \ref{par:Log-res}. Since
$V\setminus D\simeq\tilde{V}\setminus\tilde{D}$ and $V'\setminus D'\simeq\tilde{V}'\setminus\tilde{D}'$
by construction, every isomorphism $\varphi:S\rightarrow S'$ extends
in a natural way to a birational map $\tilde{\varphi}:\tilde{V}\dashrightarrow\tilde{V}'$
restricting to $\varphi$ on $S$. We claim that $\tilde{\varphi}$
is in fact a biregular isomorphism of pairs. Indeed, suppose on the
contrary that $\tilde{\varphi}$ is strictly birational and let $\tilde{V}\stackrel{\beta}{\leftarrow}X\stackrel{\beta'}{\rightarrow}\tilde{V}'$
be its minimal resolution. Recall that the minimality of the resolution
implies in particular that there is no $\left(-1\right)$-curve in
$X$ which is exceptional for $\beta$ and $\beta'$ simultaneously. 
Since $\tilde{V}$ is smooth and $\tilde{D}$ is an SNC divisor, $\beta'$
decomposes into a finite sequence of blow-downs of successive $\left(-1\right)$-curves
supported on the boundary $B=\beta^{-1}(\tilde{D})_{\mathrm{red}}=(\beta')^{-1}(\tilde{D})_{\mathrm{red}}$.
In view of the structure of $\tilde{D}$, the only possible $\left(-1\right)$-curve
in $B$ which is not exceptional for $\beta$ is the proper transform
of $E$ if $D=L+L_{1}+L_{2}$ or the proper transform of $E_{2}$
if $D=L+C$. But after the contraction of these curves, the boundary
would no longer be an SNC divisor, a contradiction. This implies that
$\tilde{\varphi}:\tilde{V}\rightarrow\tilde{V}'$ is a morphism and
the same argument shows that it does not contract any curve in the
boundary $\tilde{D}$. So $\tilde{\varphi}:\tilde{V}\rightarrow\tilde{V}'$
is an isomorphism restricting to an isomorphism between $\tilde{V}\setminus\tilde{D}$
and $\tilde{V}'\setminus\tilde{D}'$ whence an isomorphism between
the pairs $(\tilde{V},\tilde{D})$ and $(\tilde{V}',\tilde{D}')$.
It follows in particular that the intersection matrices of the divisors
$\tilde{D}$ and $\tilde{D}'$ must be the same up to a permutation.
In view of the description given in $\S$ \ref{par:Log-res}, we conclude
that either $D$ and $D'$ simultaneously consist of three concurrent
lines or of the union of a line and a smooth conic. 

In the first case, since the exceptional divisors $E$ of the blow-up
of $V$ at $p$ and $E'$ of the blow-up of $V'$ at $p'$ are the
only $\left(-1\right)$-curves in $\tilde{D}$ and $\tilde{D}'$ respectively,
the biregular isomorphism $\tilde{\varphi}:\tilde{V}\rightarrow\tilde{V}'$
extending $\varphi$ necessarily maps $E$ isomorphically onto $E'$.
So $\tilde{\varphi}$ descends to an isomorphism of pairs $\overline{\varphi}:(V,D)\rightarrow(V',D')$
which gives a). 

In the second case, letting $D=L+C$ and $D'=L'+C'$, a similar argument
implies that the isomorphism $\tilde{\varphi}:\tilde{V}\rightarrow\tilde{V}'$
maps $E_{2}$ onto $E_{2}'$ and the proper transform of $L$ onto
that of $L'$. This implies in turn that $\tilde{\varphi}$ descends
to an isomorphism $\tilde{\varphi}_{1}:W\rightarrow W'$ between the
surfaces obtained from $V$ and $V'$ by blowing-up the points
$p=L\cap C$ and $p'=L'\cap C'$ with respective exceptional divisors
$E$ and $E'$. Now we have the following alternative: either $\tilde{\varphi}_{1}$
maps $E$ and $C$ onto $E'$ and $C'$ respectively, and then it descends to a biregular
isomorphism of pairs $\overline{\varphi}:(V,D)\rightarrow(V',D')$, 
or $\tilde{\varphi}_{1}$ maps $E$ and $C$ onto $C'$ and $E'$ respectively.
In the second case, by composing $\overline{\varphi}:V\dashrightarrow V'$
either by $G_{V,p}$ on the left or by $G_{V',p'}$ on the right,
we get a new birational maps $\Psi,\Psi':V\dashrightarrow V'$ which
lift to a birational map $\Psi_{1}:W\dashrightarrow W'$ mapping
$L$, $E$ and $C$ isomorphically onto $L'$, $E'$ and $C'$ respectively.
The previous discussion implies that $\Psi$ and $\Psi'$ are isomorphism
of pairs and so we get the desired decompositions $\overline{\varphi}=\Psi\circ G_{V,p}^{-1}=\Psi\circ G_{V,p}$
and $\overline{\varphi}=G_{V',p'}^{-1}\circ\Psi'=G_{V',p'}\circ\Psi'$.
The last assertion about the uniqueness of the re-writing is clear
by construction.\end{proof}
\begin{cor}
\label{cor:Auto-desc} Let $(V,D)$ be a special pair and let $S=V\setminus D$
be the corresponding affine surface. Then the following holds:

a) If $D$ consists of three concurrent lines then $\mathrm{Aut}(S)$
is a nontrivial finite subgroup of $\mathrm{PGL}(4,\mathbb{C})$ which
coincides with the automorphism group $\mathrm{Aut}(V,D)$ of the
pair $\left(V,D\right)$. 

b) Otherwise, if $D$ consists of a line and a smooth conic, then we
have an exact sequence 
\[
0\rightarrow\mathrm{Aut}(V,D)\rightarrow\mathrm{Aut}(S)\rightarrow\mathbb{Z}_{2}\cdot j_{G_{V,p}}\rightarrow0
\]
and an isomorphism $\mathrm{Aut}(S)=\mathrm{Aut}(V,D)\rtimes\mathbb{Z}_{2}\cdot j_{G_{V,p}}$, 
where the affine Geiser involution $j_{G_{V,p}}$ acts on $\mathrm{Aut}(V,D)$
by $j_{G_{V,p}}\cdot\Psi=\Psi'$ where for every $\Psi\in\mathrm{Aut}(V,D)$,
$\Psi'$ is the unique automorphism of the pair $(V,D)$ such that
$\Psi\circ G_{V,p}=G_{V,p}\circ\Psi'$. \end{cor}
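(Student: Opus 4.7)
The plan is to deduce both statements directly from Proposition~\ref{prop:Iso-desc} applied to self-isomorphisms of $S$, combined with standard facts about smooth cubic surfaces. The key idea is that the dichotomy supplied in part (b) of that proposition is precisely what produces the short exact sequence claimed in part (b) of the corollary.

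For part (a), when $D=L+L_{1}+L_{2}$, Proposition~\ref{prop:Iso-desc}(a) yields immediately that every $\varphi\in\mathrm{Aut}(S)$ extends to a biregular automorphism of the pair $(V,D)$, so $\mathrm{Aut}(S)=\mathrm{Aut}(V,D)$. Since $V$ is a smooth cubic hypersurface in $\mathbb{P}^{3}$ embedded by its anticanonical system, every automorphism of $V$ acts linearly on $H^{0}(V,-K_{V})\simeq\mathbb{C}^{4}$ and so is induced by a linear transformation of $\mathbb{P}^{3}$; this embeds $\mathrm{Aut}(V,D)$ into $\mathrm{PGL}(4,\mathbb{C})$. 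Finiteness is then the classical fact that the automorphism group of a smooth cubic surface is finite. Non-triviality is immediate from the construction of the Geiser involution recalled above: in the Eckardt case $G_{V,p}$ is \emph{biregular} and fixes each of $L$, $L_{1}$, $L_{2}$, hence lies in $\mathrm{Aut}(V,D)$ and is nontrivial as it has $p$ as an isolated fixed point.

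For part (b), when $D=L+C$, Proposition~\ref{prop:Iso-desc}(b) furnishes the following dichotomy for each $\varphi\in\mathrm{Aut}(S)$: either the induced birational self-map $\overline{\varphi}:V\dashrightarrow V$ is biregular, i.e., an element of $\mathrm{Aut}(V,D)$, or else it admits a unique decomposition $\overline{\varphi}=G_{V,p}\circ\Psi'$ with $\Psi'\in\mathrm{Aut}(V,D)$. This gives a well-defined map $\sigma:\mathrm{Aut}(S)\to\mathbb{Z}_{2}$. To check that $\sigma$ is a group homomorphism I would use that when $\overline{\varphi}_{1}=G_{V,p}\circ\Psi_{1}$ and $\overline{\varphi}_{2}=G_{V,p}\circ\Psi_{2}$ are both of the second type, the inner segment $\Psi_{1}\circ G_{V,p}$ re-writes uniquely as $G_{V,p}\circ\widetilde{\Psi}_{1}$ by Proposition~\ref{prop:Iso-desc}(b), and the involutive property $G_{V,p}^{2}=\mathrm{id}$ then collapses the composition to $\widetilde{\Psi}_{1}\circ\Psi_{2}\in\mathrm{Aut}(V,D)$; the remaining cases are similar. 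The kernel of $\sigma$ is exactly $\mathrm{Aut}(V,D)$, and $\sigma$ is surjective because $j_{G_{V,p}}$ itself maps to the nontrivial class ($G_{V,p}$ being strictly birational on $V$). This establishes the exact sequence.

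Since $j_{G_{V,p}}$ is an involution in $\mathrm{Aut}(S)$, the subgroup $\langle j_{G_{V,p}}\rangle\simeq\mathbb{Z}_{2}$ provides a splitting, yielding the semidirect product decomposition. The action is conjugation in $\mathrm{Aut}(S)$: for $\Psi\in\mathrm{Aut}(V,D)$, the conjugate $j_{G_{V,p}}\Psi j_{G_{V,p}}^{-1}$ extends birationally on $V$ to $G_{V,p}\circ\Psi\circ G_{V,p}^{-1}$, and by the uniqueness of the re-writing $\Psi\circ G_{V,p}=G_{V,p}\circ\Psi'$ supplied by Proposition~\ref{prop:Iso-desc}(b), this equals the element $\Psi'$ appearing in the statement. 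I expect the most delicate point to be precisely this bookkeeping: verifying that the conjugate $G_{V,p}\Psi G_{V,p}^{-1}$, which is \emph{a priori} only a birational self-map of $V$, actually lands back in the biregular group $\mathrm{Aut}(V,D)$. This is not evident on its own since $G_{V,p}$ is strictly birational, but it is exactly what the uniqueness assertion in Proposition~\ref{prop:Iso-desc}(b) guarantees, and is the key input that makes the semidirect product structure well-defined.
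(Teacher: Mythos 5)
Your proposal is correct and follows essentially the same route as the paper: part (a) via Proposition \ref{prop:Iso-desc}(a) together with the anticanonical embedding, the finiteness of $\mathrm{Aut}$ of a smooth cubic surface, and the biregular (Geiser/Eckardt) involution fixing $p$ for non-triviality; part (b) as a direct consequence of the dichotomy and the uniqueness of the re-writing in Proposition \ref{prop:Iso-desc}(b). The only difference is that you spell out the homomorphism, splitting, and conjugation bookkeeping that the paper leaves implicit as ``an immediate consequence,'' and your details check out.
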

\begin{proof}
Since $V\subseteq \mathbb{P}^{3}$ is embedded by its anti-canonical
linear system $|-K_{V}|$, every automorphism of $V$ is induced by
a linear transformation of $\mathbb{P}^{3}$. Furthermore, since $D$
is a hyperplane section of $V$, $\mathrm{Aut}(V,D)$ is an algebraic
sub-group of the group of linear transformation of $\mathbb{P}^{3}$
preserving globally the corresponding hyperplane. The automorphism
group of a smooth cubic surface in $\mathbb{P}^{3}$ being finite,
it follows that $\mathrm{Aut}(V,D)$ is a finite algebraic group.
If $D$ consists of three concurrent lines, then their common point
$p$ is an Eckardt point of $V$ which is always the isolated fixed
point of a biregular involution of $V$ preserving $D$. This shows
that $\mathrm{Aut}(V,D)$ is never trivial in this case. In case b), 
where $D=L+C$ is the union of a line and smooth conic, the assertion
is an immediate consequence of the previous Proposition \ref{prop:Iso-desc}. 
\end{proof}
\begin{rem}
If $D=L+C$ then every automorphism $\Psi$ of the pair $\left(V,D\right)$
preserves the rational pencil $\overline{q}:V\dashrightarrow\mathbb{P}^{1}$
constructed in $\S$ \ref{Rk:A1-fib}. Indeed such an automorphism
certainly preserves the conic $C$ and it maps the five $\left(-1\right)$-curves
$F_{1},\ldots,F_{5}$ to five other $\left(-1\right)$-curves each
intersecting $C$ transversally. But it follows from
the construction of $V$ from $\mathbb{P}^{2}$ that $F_{1},\ldots,F_{5}$
are the only five $\left(-1\right)$-curves in $V$ with this property.
Similarly, since $L$ and $p=L\cap C$ are $\Psi$-invariant, the
image of $T$ by $\Psi$ must be a smooth rational curve intersecting
$L$ and $C$ transversally at $p$ but the construction of $V$ again
implies that $T$ is the only curve in $V$ with this property. It
follows that the divisors $C+\sum_{i=1}^{5}F_{i}$ and $3T+L$ which
generate the pencil $\overline{q}:V\dashrightarrow\mathbb{P}^{1}$
are $\Psi$-invariant whence that $\overline{q}$ is globally preserved
by $\Psi$ as claimed. This implies in turn that every automorphism
$\varphi$ of $S$ which extends to a biregular automorphism of the
pair $\left(V,D\right)$ fits into a commutative diagram 
\begin{eqnarray*}
S & \stackrel{\varphi}{\rightarrow} & S\\
q\downarrow &  & \downarrow q\\
\mathbb{P}^{1} & \stackrel{\xi}{\rightarrow} & \mathbb{P}^{1}, 
\end{eqnarray*}
where $\xi\in\mathrm{PG}L(2,\mathbb{C})$ fixes the two points of
$\mathbb{P}^{1}$ corresponding to the non isomorphic degenerate fibers
$3T\cap S$ and $\sum_{i=1}^{5}F_{i}\cap S$ of $q$. 
In contrast, it is straightforward to check from the construction
that $q:S\rightarrow\mathbb{P}^{1}$ is not globally preserved by
the affine Geiser involution $j_{G_{V,p}}$ of $S$ with center at
$p=D\cap L$ and hence that $S$ carries a second $\mathbb{A}^{1}$-fibration
$q\circ j_{G_{V,p}}:S\rightarrow\mathbb{P}^{1}$ whose general fibers
are distinct from that of $q$. In particular, if $V$ is chosen generally 
so that $\mathrm{Aut}(V,D)$ is trivial, then $\mathrm{Aut}(S)$ is
isomorphic to $\mathbb{Z}_{2}$, generated by the affine Geiser involution
$j_{G_{V,p}}$ (cf. Corollary \ref{cor:Auto-desc}, (b)), 
and $S$ carries two conjugated $\mathbb{A}^{1}$-fibrations
$q:S\rightarrow\mathbb{P}^{1}$ and $q\circ j_{G_{V,p}}:S\rightarrow\mathbb{P}^{1}$,
each of these having no non trivial automorphism.
\end{rem}

\subsection{Automorphisms of special rational pencils }

\indent\newline\noindent This subsection is devoted to the proof
of the following result:
\begin{prop}
Let $Q\subseteq \mathbb{P}^{3}$ be a smooth cubic surface and let $\overline{\rho}:\mathbb{P}^{3}\dashrightarrow\mathbb{P}^{1}$
be the special pencil associated to a line $L\subseteq Q$ and a distinguished
hyperplane $H\in\left|\mathcal{O}_{\mathbb{P}^{3}}(1)\otimes\mathcal{I}_{L}\right|$
as in $\S$ \ref{par:Log-res}. Then every automorphism of the induced
rational fibration $\rho:\mathbb{P}^{3}\setminus Q\rightarrow\mathbb{A}^{1}$
is the restriction of an automorphism of $\mathbb{P}^{3}$. \end{prop}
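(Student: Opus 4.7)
The plan is to reduce the problem to a fibrewise analysis via Proposition \ref{prop:Iso-desc} and then to assemble the resulting extensions into a single linear automorphism of $\mathbb{P}^{3}$. Let $\varphi\in\mathrm{Aut}(X)$ satisfy $\rho\circ\varphi=\xi\circ\rho$ for some $\xi\in\mathrm{Aut}(\mathbb{A}^{1})$. Writing $F$ and $G$ for homogeneous forms of degrees three and one cutting out $Q$ and $H$, the members of the pencil are $V_{[\lambda:\mu]}=\{\lambda F+\mu G^{3}=0\}$, and the identity $(\lambda F+\mu G^{3})|_{H}=\lambda F|_{H}$ shows that every $V_{t}$ with $\lambda\neq 0$ meets $H$ along the same reduced divisor $D=\{F|_{H}=0\}$. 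Consequently $D$ and the distinguished point $p\in D$ are independent of $t$, and every general fibre $S_{t}=V_{t}\setminus D$ arises from the same special pair.

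For each general $t$ the restriction $\varphi_{t}\colon S_{t}\to S_{\xi(t)}$ is an isomorphism of affine surfaces associated to special pairs, and Proposition \ref{prop:Iso-desc} asserts that it extends to a birational map $\overline{\varphi}_{t}\colon V_{t}\dashrightarrow V_{\xi(t)}$ which is either \emph{(I)} an isomorphism of pairs $(V_{t},D)\to(V_{\xi(t)},D)$, or \emph{(II)}, in the case $D=L+C$, of the form $\Psi_{\xi(t)}\circ G_{V_{t},p}$ involving the Geiser involution of $V_{t}$. In case (I), I would exploit that both $V_{t}$ and $V_{\xi(t)}$ are smooth cubic surfaces anticanonically embedded in $\mathbb{P}^{3}$, so that $\overline{\varphi}_{t}$ is the restriction of a linear transformation $A_{t}\in\mathrm{PGL}(4,\mathbb{C})$ preserving the hyperplane $H=\langle D\rangle$. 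Since $A_{t}$ is determined by $\overline{\varphi}_{t}$ up to the finite group $\mathrm{Aut}(V_{\xi(t)},D)$ (Corollary \ref{cor:Auto-desc}), and since $\varphi$ is biregular on all of $X$, so that $A_{t}$ cannot acquire poles along any fibre contained in $X$, I would conclude that the algebraic family $t\mapsto A_{t}$ is constant and produces the sought linear extension $A\in\mathrm{PGL}(4,\mathbb{C})$.

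The principal obstacle is the exclusion of case (II). The fibrewise Geiser involutions $G_{V_{t},p}$ assemble into a global birational self-map $\overline{G}\colon\mathbb{P}^{3}\dashrightarrow\mathbb{P}^{3}$ sending a general point $q$ to the third intersection of the line $\overline{pq}$ with the unique cubic $V_{t(q)}$ of the pencil through $q$. A case (II) extension $\overline{\varphi}$ decomposes as $\Psi\circ\overline{G}$ with $\Psi$ linear, hence lies in $\mathrm{Aut}(X)$ precisely when $\overline{G}$ does. The plan is to show this cannot happen: although $\overline{G}$ restricts to the biregular affine Geiser involution on each general fibre $S_{t}$ and is therefore biregular on the dense open subset of $X$ formed by the union of general fibres, it should fail to extend biregularly across the finitely many singular members $V_{t_{0}}$ of $\overline{\rho}$, where the anticanonical double cover structure used to define the Geiser degenerates and $\overline{G}$ acquires indeterminacy or exceptional behaviour meeting the affine open $S_{t_{0}}\subset X$. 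Making this breakdown precise by a careful local study of $\overline{G}$ along the singular cubic surfaces in $\overline{\rho}$ is the main technical step; once case (II) is ruled out, the linear extension is produced by the argument of the previous paragraph.
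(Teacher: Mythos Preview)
Your overall architecture matches the paper's: reduce to Proposition~\ref{prop:Iso-desc}/Corollary~\ref{cor:Auto-desc} on fibres, then rule out the Geiser contribution globally. There are, however, two places where your argument diverges from the paper in ways that matter.

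First, the paper does not work fibre by fibre over closed points and then try to ``assemble'' a family $t\mapsto A_t$ into a single element of $\mathrm{PGL}(4,\mathbb{C})$. Instead it passes to the \emph{generic} fibre $S_\eta$ over $\mathrm{Spec}\,\mathbb{C}(\lambda)$: the group $\mathrm{Aut}(\mathbb{P}^3\setminus Q,\rho)$ injects into $\mathrm{Aut}(S_\eta)$, and Proposition~\ref{prop:Iso-desc} applied over $\mathbb{C}(\lambda)$ gives $\mathrm{Aut}(S_\eta)=\mathrm{Aut}(V_\eta,D_\eta)$ or $\mathrm{Aut}(V_\eta,D_\eta)\rtimes\mathbb{Z}_2\cdot j_{G_{V_\eta,p}}$. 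An element of $\mathrm{Aut}(V_\eta,D_\eta)$ is already a $\mathbb{C}(\lambda)$-linear automorphism of $\mathbb{P}^3_{\mathbb{C}(\lambda)}$, hence the induced birational self-map of $\mathbb{P}^3$ has generic degree one and is therefore linear. This sidesteps entirely your constancy argument for $A_t$, which as written is incomplete (determination of $A_t$ up to a finite group and regularity of $\varphi$ do not by themselves force $t\mapsto A_t$ to be constant).

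Second, and more importantly, your proposed mechanism for excluding case~(II) is not the one that actually works. You look for a breakdown of the Geiser involution at \emph{singular cubic surfaces} in the pencil. The paper's obstruction is elsewhere and much simpler: the global Geiser map is the deck involution $J$ of the generically $2:1$ map $(\overline{\rho},\pi_p):\mathbb{P}^3\dashrightarrow\mathbb{P}^1\times\mathbb{P}^2$, where $\pi_p$ is projection from $p$. Since $H$ is a plane through $p$, the map $(\overline{\rho},\pi_p)$ collapses $H$ to a line in $\{\overline{\rho}(H)\}\times\mathbb{P}^2$, so $J$ \emph{contracts} $H$. But $H\cap X=\rho^{-1}(\overline{\rho}(H))$ is a genuine fibre of $\rho$, and an automorphism of the fibration cannot contract a fibre. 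Thus $j_{G_{V_\eta,p}}\notin\mathrm{Aut}(\mathbb{P}^3\setminus Q,\rho)$, and the exact sequence forces $\mathrm{Aut}(\mathbb{P}^3\setminus Q,\rho)\subseteq\mathrm{Aut}(V_\eta,D_\eta)$. So the relevant degenerate member is the triple plane $3H$, not a nodal or cuspidal cubic, and the failure is a contraction rather than mere indeterminacy. Your local study of singular cubic members is unnecessary and, as stated, does not establish the conclusion.
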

\begin{proof}
Let us denote by $\mathrm{Aut}(\mathbb{P}^{3}\setminus Q,\rho)$ the
subgroup of $\mathrm{Aut}(\mathbb{P}^{3}\setminus Q)$ consisting
of automorphisms preserving the fibration $\rho:\mathbb{P}^{3}\setminus Q\rightarrow\mathbb{A}^{1}=\mathrm{Spec}(\mathbb{C}[\lambda])$
fiberwise. Taking restriction over the generic point $\eta$ of $\mathbb{A}^{1}$
induces an injective homomorphism from $\mathrm{Aut}(\mathbb{P}^{3}\setminus Q,\rho)$
to the group $\mathrm{Aut}(S_{\eta})$ of automorphisms of the generic
fiber $S_{\eta}$ of $\rho$, and we identify from now on $\mathrm{Aut}(\mathbb{P}^{3}\setminus Q,\rho)$
with its image in $\mathrm{Aut}(S_{\eta})$. By definition, $S_{\eta}$
is a nonsingular affine surface defined over the field $\mathbb{C}(\lambda)$
whose closure in $\mathbb{P}_{\mathbb{C}(\lambda)}^{3}$ is a nonsingular
cubic surface $V_{\eta}$ such that $D_{\eta}=V_{\eta}\setminus S_{\eta}$
consists of either three lines $L_{\eta}$, $L_{\eta,1}$ and $L_{\eta,2}$
meeting a unique $\mathbb{C}(\lambda)$-rational point or the union
of a line $L_{\eta}$ and nonsingular conic $C_{\eta}$ intersecting
$L_{\eta}$ at a unique $\mathbb{C}(\lambda)$-rational point $p$.
The same argument as in the proofs of Proposition \ref{prop:Iso-desc}
and its Corollary \ref{cor:Auto-desc} implies that $\mathrm{Aut}(S_{\eta})=\mathrm{Aut}(V_{\eta},D_{\eta})$
in the first case whereas $\mathrm{Aut}(S_{\eta})=\mathrm{Aut}(V_{\eta},D_{\eta})\rtimes\mathbb{Z}_{2}\cdot j_{G_{V_{\eta},p}}$, 
where $j_{G_{V_{\eta},p}}$ denotes the affine Geiser involution with 
the center at $p$, in the second case. In the first case, we conclude that every automorphism of $\mathrm{Aut}(\mathbb{P}^{3}\setminus Q,\rho)$
is generically of degree one when considered as a birational self-map
of $\mathbb{P}^{3}$ and hence is the restriction of an automorphism
of $\mathbb{P}^{3}$. So it remains to show that in the second case,
one has $\mathrm{Aut}(\mathbb{P}^{3}\setminus Q,\rho)\subseteq \mathrm{Aut}(V_{\eta},D_{\eta})$
necessarily. Suppose on the contrary that $\mathrm{Aut}(\mathbb{P}^{3}\setminus Q,\rho)\not\subseteq \mathrm{Aut}(V_{\eta},D_{\eta})$.
Then since we have an extension 
\[
0\rightarrow\mathrm{Aut}(V_{\eta},D_{\eta})\rightarrow\mathrm{Aut}(S_{\eta})\rightarrow\mathbb{Z}_{2}\cdot j_{G_{V_{\eta,p}}}\rightarrow0,
\]
it would follow that $j_{G_{V_{\eta,p}}}\in\mathrm{Aut}(\mathbb{P}^{3}\setminus Q,\rho)$.
On the other hand, letting $\pi_{p}:\mathbb{P}^{3}\dashrightarrow\mathbb{P}^{2}$
be the projection from the point $p$, the rational map $(\overline{\rho},\pi_{p}):\mathbb{P}^{3}\dashrightarrow\mathbb{P}^{1}\times\mathbb{P}^{2}$
is generically of degree $2$, contracting the plane $H$ to a line
$\pi_{p}(H)\in\overline{\rho}(H)\times\mathbb{P}^{2}\simeq\mathbb{P}^{2}$
and inducing a Galois double covering 
\[
\mathbb{P}^{3} \setminus (Q\cup H) \rightarrow\mathbb{P}^{1}\setminus\left\{ \overline{\rho}(Q)\cup\overline{\rho}(H)\right\} \times\mathbb{P}^{2}\setminus(\pi_{p}(Q)\cup\pi_{p}(H))\simeq\mathbb{A}_{*}^{1}\times\mathbb{A}^{2}\setminus(\pi_{p}(Q)).
\]
By definition, the affine Geiser involution $j_{G_{V_{\eta},p}}$
of $S_{\eta}$ is simply the restriction to the generic fiber of $\rho$
of the nontrivial involution $J$ of this covering. So $j_{G_{V_{\eta,p}}}\in\mathrm{Aut}(\mathbb{P}^{3}\setminus Q,\rho)$
considered as a birational self-map of $\mathbb{P}^{3}$ would coincide
with $J$ which is absurd since the latter contracts $H$ whence the
fiber $\rho^{-1}(\overline{\rho}(H))$ of $\rho$. Thus $\mathrm{Aut}(\mathbb{P}^{3}\setminus Q,\rho)\subseteq\mathrm{Aut}(V_{\eta},D_{\eta})$
as desired. 
\end{proof}

\bibliographystyle{amsplain}

\providecommand{\bysame}{\leavevmode\hbox to3em{\hrulefill}\thinspace}
\providecommand{\MR}{\relax\ifhmode\unskip\space\fi MR }
\providecommand{\MRhref}[2]{%
  \href{http://www.ams.org/mathscinet-getitem?mr=#1}{#2}
}
\providecommand{\href}[2]{#2}
\begin{thebibliography}{}

\end{thebibliography}


\begin{thebibliography}{10}

\bibitem{CM72} C. Clemens, and P. Griffiths, {\em The intermediate Jacobian of the cubic threefold},The Annals of Mathematics, Vol. 95 (1972), No. 2 281-356.

\bibitem{Cor95} A. Corti, {\em Factoring birational maps of threefolds after Sarkisov}, J. Algebraic Geom. 4 (1995), no. 2, 223? 223-254.

\bibitem{dF06} T. de Fernex, {\em Birationally rigid hypersurfaces}, Invent. Math., to appear, preprint arXiv:math/0604213.

\bibitem{Freu} G. Freudenburg, {\em Algebraic Theory of Locally Nilpotent Derivations}, Encyclopaedia of Mathematical Sciences Invariant Theory and Algebraic Transformation Groups, Volume VII R. V. Gamkrelidze, V. L. Popov, Subseries Editors Springer Verlag (Berlin, Heidelberg, New York) 2006. 

\bibitem{GMM12} R. Gurjar, M. Masuda and M. Miyanishi, {\em $\mathbb{A}^1$-fibrations on affine threefolds}, Journal of Pure and Applied Algebra, 216 (2012), 296-313. 

\bibitem{Ii77} S. Iitaka, {\em On logarithmic Kodaira dimension of algebraic varieties}, Complex Analysis and Algebraic Geometry (W.L. Baily, Jr. and T. Shioda, eds.), Iwanami Shoten, Tokyo, 1977, 175-189. 

\bibitem{KZ01} S. Kaliman and M. Zaidenberg, {\em Families of affine planes: The existence of a cylinder}, Michigan Math. J. 49 (2001), 353-367.

\bibitem{KM99} S. Keel and J. M${}^{\rm c}$Kernan, {\em Rational curves on quasi-projective surfaces}, Memoirs of AMS, No. 669, American Mathematical Society, 1999. 


\bibitem{KPZ} T. Kishimoto, Yu. Prokhorov, M. Zaidenberg, {\em Group actions on affine cones}, Affine Algebraic Geometry, The Russell Festschrift (Eds. D. Daigle, R. Ganong and M. Koras), CRM Proceedings $\&$ Lecture Note Vol. 54, American Mathematical Society, 2011, pp. 123-163. 

\bibitem{KPZ11} T. Kishimoto, Yu. Prokhorov and M. Zaidenberg, {\em Affine cones over Fano threefolds and additive group actions}, preprint {\tt arXiv:1106.1313v1 [math.AG]}.

\bibitem{FA} J. Koll\'ar et al, {\em Flips and abundance for algebraic threefolds}, Ast\'erisque 211, Soci\'et\'e Math\'ematique de France, 1992. 

\bibitem{KM98} J. Koll\'ar and S. Mori, {\em Birational Geometry of Algebraic Varieties}, Cambridge Tracts in Math. 134, Cambridge University Press. 

\bibitem{Mi81}  M. Miyanishi, {\em Singularities on normal affine surfaces containing cylinderlike open sets}, J. Algebra, 68 (1981), 268-275. 

\bibitem{MS80} M. Miyanishi and T. Sugie, {\em Affine surfaces containing cylinderlike open sets}, J. Math. Kyoto Univ. 20 (1980), no. 1, 11-42. 


\bibitem{Na78}  M. Nagata, {\em Polynomial rings and affine spaces}, Regional Conference Series in Math. Vol.37, AMS, Providence, RI, 1978. 

\bibitem{Oh01} T. Ohta, {\em The structure of algebraic embeddings of $\mathbb{C}^{2}$ into $\mathbb{C}^{3}$ (the normal quartic hypersurface case. I)}, Osaka J. Math. 38 (2001), 507-532.

\bibitem{Pu98}  A.V. Pukhlikov, {\em Birational automorphisms of Fano hypersurfaces}, Invent. Math. 134 (1998), 401-426. 



\bibitem{Sei66} A. Seidenberg, {\em Derivations and integral closure}, Pacific Journal, 16(1966), 167-173.

\bibitem{Zar54} O. Zariski, {\em Interpretations alg{\'e}brico-g{\'e}ometriques du quatorzi{\`e}me probleme de Hilbert}, Bulletin des Sciences Math{\'e}ematiques 78 (1954), pp. 155-168.

\end{thebibliography}
 
\end{document}